\newif\ifpictures
\newcommand{\unboundedInstances}{37}
\newcommand{\soncBoundDegenerate}{2279}
\newcommand{\soncDegeneratePercent}{94}
\newcommand{\soncBound}{16240}
\newcommand{\degenerateInstances}{2413}
\newcommand{\trivialInstances}{501}
\newcommand{\instances}{16921}
\setlist[description]{leftmargin=1em,labelindent=0em}
\author{Henning Seidler} 
\address{Henning Seidler, Technische Universit\"at Berlin, Institut f\"ur Mathematik, Stra{\ss}e des 17.~Juni 136, 10623 Berlin,
 Germany\medskip}
\email{seidler@math.tu-berlin.de}
\author{Timo de Wolff}
\address{Timo de Wolff, Technische Universit\"at Berlin, Institut f\"ur Mathematik, Stra{\ss}e des 17.~Juni 136, 10623 Berlin,
 Germany\medskip}
\email{dewolff@math.tu-berlin.de}
\subjclass[2010]{Primary: 14P99, 90-04, 90C22, 90C26; Secondary: 14Q20, 52B20, 68Q25 \textit{ACM Subject Classification:} G.4 / Mathematical software performance}
\keywords{Certificate, Nonnegativity, Polynomial optimization, Sum of nonnegative circuit polynomials, Sum of squares, Unconstrained}
\newif\ifcomment
\title[An Experimental Comparison of SONC and SOS Certificates for Unconstrained Optimization]{An Experimental Comparison of SONC and SOS Certificates for Unconstrained Optimization}
\begin{document}

\pagestyle{plain}
\pagenumbering{arabic}

\begin{abstract}
	Finding the minimum of a multivariate real polynomial is a well-known hard problem with various applications.
	We present a polynomial time algorithm to approximate such lower bounds via sums of nonnegative circuit polynomials (SONC).
	As a main result, we carry out the first large-scale comparison of SONC, using this algorithm and different geometric programming (GP) solvers, with the classical sums of squares (SOS) approach, using several of the most common semidefinite programming (SDP) solvers.
	SONC yields bounds competitive to SOS in several cases, but using significantly less time and memory. 
	In particular, SONC/GP can handle much larger problem instances than SOS/SDP.
\end{abstract}

\maketitle

\section{Introduction}

Minimizing a given real, multivariate polynomial $f \in \R[\Vector{x}]$ is the fundamental challenge of polynomial optimization.
This problem has countless applications, see e.g., \cite{Lasserre:Book:CPOPApplications}.
Since it is \NP-hard, e.g., \cite{Laurent:Survey}, one commonly guarantees nonnegativity via a \struc{\textit{certificate of nonnegativity}}, a sufficient, but not necessary condition, which is easier to check than nonnegativity itself.
The standard certificates of nonnegativity are \struc{\textit{sums of squares (SOS)}}, which have a long history dating back to Hilbert; see \cite{Hilbert:Seminal} and see \cite{Reznick:HilbertSurvey} for a historical overview.
SOS certificates can be detected via \struc{\textit{semidefinite programming (SDP)}} and were tremendously successful in recent years; see e.g., \cite{Blekherman:Parrilo:Thomas,Laurent:Survey,Lasserre:Book:CPOPApplications,Lasserre:Book:CPOPIntroduction} for an overview.

The SOS approach has, however, certain limits: In 2006, Blekherman proved that for fixed degree $2d \geq 4$ and $n \to \infty$ almost every nonnegative polynomial is not SOS \cite{Blekherman:ConeComparison}.
Moreover, the SDP, which is equivalent to detecting an $n$-variate degree $d$ SOS, has a matrix-variable of size $\binom{n+d}{d}$.
Thus, the corresponding SDPs are too large to be solved even for moderate size of $n$ and $d$, often even if further preprocessing tools are exploited.

In the recent past, building on work by Reznick \cite{Reznick:AGI}, Iliman and the second author \cite{Iliman:deWolff:Circuits} suggested a certificate based on \struc{\emph{sums of nonnegative circuit polynomials (SONC)}}, a particular class of sparse polynomials, as an alternative to SOS for hard polynomial optimization problems; see \Cref{section:preliminaries} for details.
Generalizing an approach by Ghasemi and Marshall \cite{Ghasemi:Marshall:GP,Ghasemi:Marshall:GP:Semialgebraic}, they showed in \cite{Iliman:deWolff:GP} that SONC certificates can be detected via \struc{\emph{geometric programming (GP)}} if the investigated polynomial has a simplex Newton polytope.
Joint with Dressler and Iliman, the second author provided a first heuristic for computing SONC certificates for polynomials with non-simplex Newton polytope \cite{Dressler:Iliman:deWolff:SONCApproachConstraints}.

While investigated examples suggested significant shorter runtimes for SONC/GP compared to SOS/SDP, the approach had shortcomings so far:
\begin{enumerate}
	\item Due to the non-existence of a software for the SONC approach, examples could only be computed with severe effort.
	\item As a consequence, no large scale comparison of both approaches had been carried out; the existing examples had to be considered as preliminary data.
	\item There existed no deterministic algorithm to compute SONC certificates for polynomials with non-simplex Newton polytope.
\end{enumerate}

In this article, we resolve these shortcomings.
We present an algorithm for computing SONC certificates for \struc{\emph{sparse polynomials}} with arbitrary Newton polytope; see \cref{algorithm:bound}.
The key idea is to generate a covering of the Newton polytope of a given $f$ via solving several linear optimization problems and then considering the distribution of coefficients as part of the GP we have to solve.
We show that this algorithm is not only systematic, but also outperforms the heuristic approach in \cite{Dressler:Iliman:deWolff:SONCApproachConstraints} with respect to the quality of the computed bounds; see \cref{section:describing_the_software} for further details.
Moreover, our \cref{algorithm:bound} runs in \struc{\emph{polynomial time}}, see \cref{theorem:full_algo_poly_time}.

Our main contribution is an experimental large-scale comparison of SONC/GP versus SOS/SDP using these algorithms, which are implemented in our software POEM (Effective Methods in Polynomial Optimization; developed since July 2017) \cite{poem:software}.
The software is available at 
\begin{center}
	\url{https://www3.math.tu-berlin.de/combi/RAAGConOpt/poem.html}.
\end{center}
We let it compete with a broad range of the existing standard SDP solvers, explicitly allowing them to exploit sparsity.
We generated a database of \instances{} test cases covering polynomials with various sorts of Newton polytopes, number of variables in the range of 2 to 40, degree in the range of 6 to 60, and a cardinality of terms in the range of 6 to 500.
Among these are \trivialInstances{} trivial cases, which are sums of monomial squares.

The full database of instances is available at the software homepage; see \cref{subsection:generating_polynomials} for further details on its generation.
The complete database of computations (around 23 GB) is available on inquiry.

We discuss all computational results in detail in \Cref{subsection:evaluation}. In summary, our software executing \cref{algorithm:bound}, combined with the solver ECOS \cite{ecos}, solved most instances within a few seconds.
Even for the largest of our instances it required less than 5 minutes. 
Over all, we obtained a SONC bound in \soncBound{} (98.9 \%) of the 16420 investigated non-trivial cases.
In contrast, we obtained an SOS bound only in 5161 (31.4 \%) out of the 16420 investigated non-trivial cases. 
In 5732 (34.9 \%) cases, where the SOS solvers did not find a bound, they did not terminate due to numerical issues, or ran out of memory. 
The remaining 6028 cases led to SDPs with matrices of sizes beyond $1000 \times 1000$, which we established experimentally as a reasonable threshold for 16 GB of RAM.

In comparison, we achieved a SONC but no SOS bound in 11760 (71.6 \%) of the cases and an SOS but no SONC bound in only 44 (0.0027 \%) of the cases.

Among the $4479$ cases (27.3 \% of the entire nontrivial cases) where we achieved both SOS and SONC bounds, usually the SOS bound is better (by more than $10^{-3}$), namely in 3693 cases (82.5 \%). 
In $782$ of these cases, however, the Newton polytope was a scaled standard simplex, where SOS is at least as good as SONC by construction. 
Moreover, only in 1625 (36.28 \%) of the cases the difference of the bounds was greater than 1.

We visualize the main outcome of our computations in \cref{figure:plot_bar_difference}: 
In the left plot, we depict the number of instances such that the optimality bounds $\struc{\SOSopt}$ and $\struc{\GPopt}$, which we obtain using SOS and the SONC certificates, satisfy: $\GPopt - \SOSopt$ is the interval on the lower axis. 
In the right plot, we present the ratio of the runtime of the fastest SOS method compared to the runtime of our SONC algorithm with ECOS and depict for how many instances the ratio lies in the given interval.

\begin{figure}[t]
	\ifpictures
	\centering
\begin{tikzpicture}

\definecolor{color0}{rgb}{0.12156862745098,0.466666666666667,0.705882352941177}

\begin{groupplot}[group style={group size=2 by 1}]
\nextgroupplot[
ylabel={number of instances},
xmin=-0.74, xmax=6.74,
ymin=0, ymax=12348,
xtick={0.5,1.5,2.5,3.5,4.5,5.5,6.5},
xticklabels={$\infty$,$1$,$0.001$,$-0.001$,$-1$,$-\infty$,},
tick align=outside,
xticklabel style = {rotate=30},
tick pos=left,
x grid style={white!69.01960784313725!black},
y grid style={white!69.01960784313725!black}
]
\draw[fill=color0,draw opacity=0] (axis cs:-0.4,0) rectangle (axis cs:0.4,44);
\draw[fill=color0,draw opacity=0] (axis cs:0.6,0) rectangle (axis cs:1.4,1625);
\draw[fill=color0,draw opacity=0] (axis cs:1.6,0) rectangle (axis cs:2.4,2068);
\draw[fill=color0,draw opacity=0] (axis cs:2.6,0) rectangle (axis cs:3.4,598);
\draw[fill=color0,draw opacity=0] (axis cs:3.6,0) rectangle (axis cs:4.4,61);
\draw[fill=color0,draw opacity=0] (axis cs:4.6,0) rectangle (axis cs:5.4,127);
\draw[fill=color0,draw opacity=0] (axis cs:5.6,0) rectangle (axis cs:6.4,5732);
\draw[fill=red,draw opacity=0] (axis cs:-0.4,44) rectangle (axis cs:0.4,44);
\draw[fill=red,draw opacity=0] (axis cs:0.6,1625) rectangle (axis cs:1.4,1625);
\draw[fill=red,draw opacity=0] (axis cs:1.6,2068) rectangle (axis cs:2.4,2068);
\draw[fill=red,draw opacity=0] (axis cs:2.6,598) rectangle (axis cs:3.4,598);
\draw[fill=red,draw opacity=0] (axis cs:3.6,61) rectangle (axis cs:4.4,61);
\draw[fill=red,draw opacity=0] (axis cs:4.6,127) rectangle (axis cs:5.4,127);
\draw[fill=red,draw opacity=0] (axis cs:5.6,5732) rectangle (axis cs:6.4,11760);
\node at (axis cs:-0.35,144)[
  scale=0.5,
  anchor=base west,
  text=black,
  rotate=0.0
]{ 44};
\node at (axis cs:0.65,1725)[
  scale=0.5,
  anchor=base west,
  text=black,
  rotate=0.0
]{ 1625};
\node at (axis cs:1.65,2168)[
  scale=0.5,
  anchor=base west,
  text=black,
  rotate=0.0
]{ 2068};
\node at (axis cs:2.65,698)[
  scale=0.5,
  anchor=base west,
  text=black,
  rotate=0.0
]{ 598};
\node at (axis cs:3.65,161)[
  scale=0.5,
  anchor=base west,
  text=black,
  rotate=0.0
]{ 61};
\node at (axis cs:4.65,227)[
  scale=0.5,
  anchor=base west,
  text=black,
  rotate=0.0
]{ 127};
\node at (axis cs:5.65,5832)[
  scale=0.5,
  anchor=base west,
  text=black,
  rotate=0.0
]{ 5732};
\node at (axis cs:5.65,11860)[
  scale=0.5,
  anchor=base west,
  text=red,
  rotate=0.0
]{ 11760};
\nextgroupplot[
xmin=-0.84, xmax=8.84,
ymin=0, ymax=12348,
xtick={0.5,1.5,2.5,3.5,4.5,5.5,6.5,7.5,8.5},
xticklabels={$0$,$1$,$2$,$4$,$10$,$100$,$1000$,$\infty$,},
tick align=outside,
xticklabel style = {rotate=30},
tick pos=left,
x grid style={white!69.01960784313725!black},
y grid style={white!69.01960784313725!black}
]
\draw[fill=color0,draw opacity=0] (axis cs:-0.4,0) rectangle (axis cs:0.4,44);
\draw[fill=color0,draw opacity=0] (axis cs:0.6,0) rectangle (axis cs:1.4,15);
\draw[fill=color0,draw opacity=0] (axis cs:1.6,0) rectangle (axis cs:2.4,510);
\draw[fill=color0,draw opacity=0] (axis cs:2.6,0) rectangle (axis cs:3.4,1709);
\draw[fill=color0,draw opacity=0] (axis cs:3.6,0) rectangle (axis cs:4.4,876);
\draw[fill=color0,draw opacity=0] (axis cs:4.6,0) rectangle (axis cs:5.4,1071);
\draw[fill=color0,draw opacity=0] (axis cs:5.6,0) rectangle (axis cs:6.4,250);
\draw[fill=color0,draw opacity=0] (axis cs:6.6,0) rectangle (axis cs:7.4,32);
\draw[fill=color0,draw opacity=0] (axis cs:7.6,0) rectangle (axis cs:8.4,11760);
\draw[fill=red,draw opacity=0] (axis cs:-0.4,44) rectangle (axis cs:0.4,44);
\draw[fill=red,draw opacity=0] (axis cs:0.6,15) rectangle (axis cs:1.4,15);
\draw[fill=red,draw opacity=0] (axis cs:1.6,510) rectangle (axis cs:2.4,510);
\draw[fill=red,draw opacity=0] (axis cs:2.6,1709) rectangle (axis cs:3.4,1709);
\draw[fill=red,draw opacity=0] (axis cs:3.6,876) rectangle (axis cs:4.4,876);
\draw[fill=red,draw opacity=0] (axis cs:4.6,1071) rectangle (axis cs:5.4,1071);
\draw[fill=red,draw opacity=0] (axis cs:5.6,250) rectangle (axis cs:6.4,250);
\draw[fill=red,draw opacity=0] (axis cs:6.6,32) rectangle (axis cs:7.4,32);
\draw[fill=red,draw opacity=0] (axis cs:7.6,5732) rectangle (axis cs:8.4,11760);
\node at (axis cs:-0.35,144)[
  scale=0.5,
  anchor=base west,
  text=black,
  rotate=0.0
]{ 44};
\node at (axis cs:0.65,115)[
  scale=0.5,
  anchor=base west,
  text=black,
  rotate=0.0
]{ 15};
\node at (axis cs:1.65,610)[
  scale=0.5,
  anchor=base west,
  text=black,
  rotate=0.0
]{ 510};
\node at (axis cs:2.65,1809)[
  scale=0.5,
  anchor=base west,
  text=black,
  rotate=0.0
]{ 1709};
\node at (axis cs:3.65,976)[
  scale=0.5,
  anchor=base west,
  text=black,
  rotate=0.0
]{ 876};
\node at (axis cs:4.65,1171)[
  scale=0.5,
  anchor=base west,
  text=black,
  rotate=0.0
]{ 1071};
\node at (axis cs:5.65,350)[
  scale=0.5,
  anchor=base west,
  text=black,
  rotate=0.0
]{ 250};
\node at (axis cs:6.65,132)[
  scale=0.5,
  anchor=base west,
  text=black,
  rotate=0.0
]{ 32};
\node at (axis cs:7.65,5832)[
  scale=0.5,
  anchor=base west,
  text=black,
  rotate=0.0
]{ 5732};
\node at (axis cs:7.5,11860)[
  scale=0.5,
  anchor=base west,
  text=red,
  rotate=0.0
]{ 11760};
\end{groupplot}

\end{tikzpicture}
	\caption{Number of instances where (left) $\GPopt-\SOSopt$ lies in the interval given on the lower axis; (right) the ratio $\operatorname{SOS}/\operatorname{SONC}$ of the running time lies in the interval given on the lower axis.
	The upper red bar represents instances, where the problem size for SOS lies above our threshold.}
	\label{figure:plot_bar_difference}
	\fi
\end{figure}
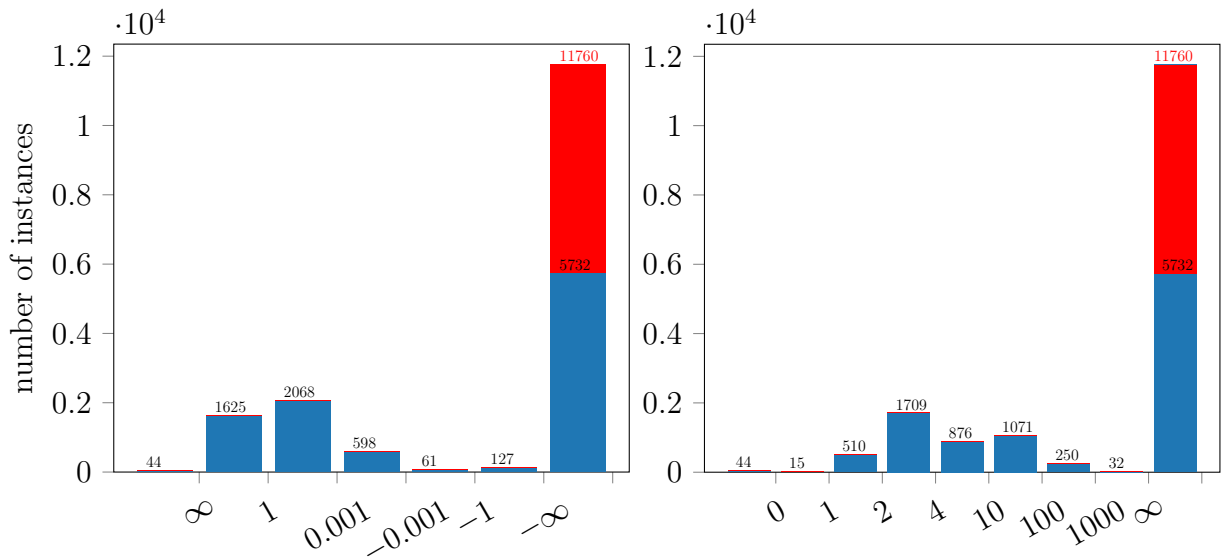

The article is organized as follows: 
In \cref{section:preliminaries} we introduce our notation and previous results about SONCs. 
In \cref{section:describing_the_software} we introduce develop \cref{algorithm:bound} and prove that it runs in polynomial time \cref{theorem:full_algo_poly_time}. 
In \cref{section:running_test_cases} we describe the setup of our experiment and present its outcome. 
Finally, we draw the conclusions from our experiments in \cref{section:conclusion}.

\subsection*{Acknowledgements}

We thank Mareike Dressler, Meike Hatzel, Sadik Iliman and Michael Joswig for their helpful suggestions. 
We thank Benjamin Lorenz for his invaluable technical support.

Both authors of this article are supported by the DFG grant WO 2206/1-1.

\section{Preliminaries}
\label{section:preliminaries}

In this section we introduce our basic notation, sums of squares, sums of nonnegative circuit polynomials, and geometric programs.

\subsection{Representing Sparse Polynomials}

Throughout the paper, we use bold letters for vectors (small) and matrices (capital), e.g., $\struc{\Vector{x}}=(x_1,\ldots,x_n) \in \R^n$.
Let $\struc{\R_{\geq 0}}$ and $\struc{\R_{> 0}}$ be the set of nonnegative and positive real numbers, respectively.
Furthermore, let \struc{$\R[\Vector{x}] = \R[x_1,\ldots,x_n]$} be the ring of real $n$-variate polynomials. 
We denote the set of all $n$-variate polynomials of degree less than or equal to $2d$ by $\struc{\R[\Vector{x}]_{n,2d}}$. 
For $p \in \R[\Vector{x}]$ we denote the \struc{\textit{total degree}} of $p$ by $\struc{\deg(p)}$.

We mostly regard \struc{\emph{sparse polynomials}} $p \in \R[\Vector{x}]$ supported on a finite set $\struc{A} \subset \N^n$; we write $\struc{\support{p}}$ if a clarification is necessary.
Thus, $p$ is of the form $\struc{p(\Vector{x})} = \sum_{\Vector{\alpha} \in A}^{} b_{\Vector{\alpha}}\Vector{x}^{\Vector{\alpha}}$ with $\struc{b_{\Vector{\alpha}}} \in \R\setminus\{0\}$ and $\struc{\Vector{x}^{\Vector{\alpha}}} = x_1^{\alpha_1} \cdots x_n^{\alpha_n}$. 
Unless stated differently, we follow the convention $\struc{t} = \# A$.
The support of $p$ can be expressed as an $n \times t$ matrix, which we denote by $\struc{\Matrix{A}}$
, such that the $j$-th column of $\Matrix{A}$ is $\Vector{\alpha(j)}$.
Hence, $p$ is uniquely described by the pair $(\Matrix{A},\Vector{b})$, written $p = \struc{\polynomial{\Matrix{A}}{\Vector{b}}}$.

We denote by $\struc{\New(p)} = \conv\left(\{\Vector{\alpha} \in \N^n : b_{\Vector{\alpha}} \neq 0\}\right) = \conv\left({\support{p}}\right)$ the \textit{\struc{Newton polytope}} of $p$ with vertices $\struc{\vertices{p}} = \left\{\Vector\alpha \in \support{p} : \Vector{\alpha} \text{ is vertex of } \newton{p}\right\}$.
A lattice point is called \struc{\textit{even}} if it is in $(2\N)^n$ and a term $b_{\Vector{\alpha}}\Vector{x}^{\Vector{\alpha}}$ is called a \struc{\emph{monomial square}} if $b_{\Vector{\alpha}} > 0$ and $\Vector{\alpha}$ even.
We define $\struc{\monoSquares{p}} = \left\{\Vector{\alpha} \in \support{p} : \Vector{\alpha} \in (2\N)^n, b_{\Vector{\alpha}} > 0\right\}$ as the set of monomial squares in the support of $p$.
Moreover, we use the notation $\struc{\nonSquares{p}} = \support{p} \setminus \monoSquares{p}$ for all elements of the support of $p$, which are not a monomial square. We indicate the elements of the support which are in the interior of $\New(p)$ by $\struc{\interior{p}} = \support{p} \setminus \partial \New(p)$.

\subsection{Formulation of the SOS-problem}
\label{subsection:SOS}

Let $p \in \R[\Vector{x}]$. 
In unconstrained polynomial optimization we are interested in computing the infimum of $p$ given by
\begin{align*}
	\begin{aligned}
		\struc{p^*} & = & \inf_{\Vector{x} \in \R^n} p(\Vector{x}) \ = \ \sup\{\gamma \in \R \ : \ p(\Vector{x}) - \gamma \geq 0 \text{ for all } \Vector{x} \in \R^n\}.
	\end{aligned}
\end{align*}
Note that a polynomial is bounded below if and only if its homogenization is nonnegative.
Since deciding nonnegativity is \coNP{}-hard \cite{Laurent:Survey}, deciding, whether $p^*$ is finite, also is \coNP{}-hard.

A polynomial $p$ is a \struc{\textit{sum of squares} (SOS)} if there exist $s_1,\ldots,s_r \in \R[\Vector{x}]$ such that $p = \sum_{j = 1}^r s_j^2$. 
Writing a polynomial as a sum of squares provides a certificate for nonnegativity. 
We denote
\begin{align*}
	\begin{aligned}
		\struc{\SOSopt} & = & \sup\{\gamma \in \R \ : \ p(\Vector{x}) - \gamma \text{ is a sum of squares}\},
	\end{aligned}
\end{align*}
and have $\SOSopt \leq p^*$. Let $2d = \deg(p)$. The bound \SOSopt{} is the optimal value of the following \struc{\textit{semidefinite program (SDP)}}
\begin{mini}
	{}
	{\gamma}
	{\label{problem:prob_sos}\tag{SDP-SOS}}
	{\SOSopt \ = \ }
	\myCons{X_{0,0}}{= b_0 + \gamma}{}
	\myCons{\sum_{\stackrel{\Vector{\beta}\ \leq\ \Vector{\alpha}}{\left|\Vector{\beta}\right|\ \leq\ d}} X_{\Vector{\beta},\Vector{\alpha}-\Vector{\beta}}}{= b_{\Vector{\alpha}}}{1\leq |\Vector{\alpha}|\leq 2d}
	\myCons{\Matrix{X}}{\succeq 0}{}
\end{mini}
where the last conditions means $\Matrix{X}$ is positive semidefinite, and we write $\Vector{\alpha}\leq\Vector{\beta}$ if and only if $\alpha_i\leq\beta_i$ for all $i=1,\ldots,n$. 
For background on SOS and SDPs see e.g., \cite{Blekherman:Parrilo:Thomas,Laurent:Survey}.
Given a feasible starting point and an accuracy $\varepsilon > 0$, an SDP can be solved up to accuracy $\varepsilon$ in time polynomial in the \emph{problem size}.

The size of the problem is, however, the major drawback of this approach: $\Matrix{X}$ is a $\binom{n+d}{d}\times\binom{n+d}{d}$-matrix and we have $\binom{n+2d}{2d}$ constraints.
Thus, if both the number of variables and the degree grow larger, the problem requires an exponentially large amount of RAM, even to state it.
Therefore, in this case, solving the problem becomes practically infeasible, even for moderate sizes of $d$ and $n$.

\subsection{Sums of Nonnegative Circuit Polynomials}

We introduce the fundamental facts of SONC polynomials, which we use in this article. 
SONCs are constructed by \textit{circuit polynomials}; which were first introduced in \cite{Iliman:deWolff:Circuits}:

\begin{definition}
	A \struc{\emph{circuit polynomial}} $p = \poly(\Matrix{A},\Vector{b}) \in \R[\Vector{x}]$ is of the form
	\begin{eqnarray}
		\struc{p(\Vector{x})} & = & \sum_{j=0}^r b_{\Vector{\alpha}(j)} \Vector{x}^{\Vector{\alpha}(j)} + b_{\Vector{\beta}} \Vector{x}^{\Vector{\beta}}, \label{Equ:CircuitPolynomial}
	\end{eqnarray}
	with $0 \leq \struc{r} \leq n$, coefficients $\struc{b_{\Vector{\alpha}(j)}} \in \R_{> 0}$, $\struc{b_{\Vector{\beta}}} \in \R$, exponents $\struc{\Vector{\alpha}(j)} \in (2\Z)^n$, $\struc{\Vector{\beta}} \in \Z^n$, such that the following condition holds:
	There exist unique, positive \struc{\emph{barycentric coordinates} $\lambda_j$} relative to the $\Vector{\alpha}(j)$ with $j=0,\ldots,r$ satisfying
        \begin{align}
            \label{equ:BarycentricCoordinates}
            & & \Vector{\beta} \ = \ \sum_{j=0}^r \lambda_j \Vector{\alpha}(j) \ \text{ with } \ \lambda_j \ > \ 0 \ \text{ and } \ \sum_{j=0}^r \lambda_j \ = \ 1.
        \end{align}
    For every circuit polynomial $p$ we define the corresponding \struc{\textit{circuit number}} as
	\begin{align}
		\label{equ:DefCircuitNumber}
		\struc{\Theta_p} \ = \ \prod_{j = 0}^r \left(\frac{b_{\Vector{\alpha}(j)}}{\lambda_j}\right)^{\lambda_j}. 
	\end{align}
	\label{Def:CircuitPolynomial}
\end{definition}
	Condition \eqref{equ:BarycentricCoordinates} implies that $\Matrix{A}(p)$ forms a minimal affine dependent set. 
	Those sets are called \struc{\textit{circuits}}, see e.g., \cite{Oxley:MatroidTheory}.
	More specifically, Condition \eqref{equ:BarycentricCoordinates} yields that $\New(p)$ is a simplex with even vertices $\Vector{\alpha}(0), \Vector{\alpha}(1),\ldots,\Vector{\alpha}(r)$ and that the exponent $\Vector{\beta}$ is in the strict interior of $\New(p)$ if $\dim(\New(p)) \geq 1$. 
	Therefore, we call the terms $p_{\Vector{\alpha}(0)} \Vector{x}^{\Vector{\alpha}(0)},\ldots,p_{\Vector{\alpha}(r)} \Vector{x}^{\Vector{\alpha}(r)}$ the \struc{\emph{outer terms}} and $p_{\Vector{\beta}} \Vector{x}^{\Vector{\beta}}$ the \struc{\emph{inner term}} of $p$.

Circuit polynomials are proper building blocks for nonnegativity certificates since the circuit number alone determines whether they are nonnegative.

\begin{theorem}[\cite{Iliman:deWolff:Circuits}, Theorem 3.8]
	Let $p$ be a circuit polynomial of the form \eqref{Equ:CircuitPolynomial}. Then $p$ is nonnegative if and only if:
	\begin{enumerate}
		\item $p$ is a sum of monomial squares, or
		\item the coefficient $b_{\Vector{\beta}}$ of the inner term of $p$ satisfies $|b_{\Vector{\beta}}| \leq \Theta_p$.
	\end{enumerate}
	\label{thm:CircuitPolynomialNonnegativity}
\end{theorem}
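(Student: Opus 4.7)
The plan is the standard weighted AM--GM argument applied to the outer terms. Observe first that since each $\Vector{\alpha}(j) \in (2\Z)^n$ and $b_{\Vector{\alpha}(j)} > 0$, each outer summand $b_{\Vector{\alpha}(j)}\Vector{x}^{\Vector{\alpha}(j)}$ is pointwise nonnegative on $\R^n$; writing $|\Vector{x}| := (|x_1|,\ldots,|x_n|)$, one in fact has $\Vector{x}^{\Vector{\alpha}(j)} = |\Vector{x}|^{\Vector{\alpha}(j)}$ for all $\Vector{x}$.

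For the ``if'' direction I would apply weighted AM--GM with weights $\lambda_j$ coming from \eqref{equ:BarycentricCoordinates} to the nonnegative quantities $b_{\Vector{\alpha}(j)} \Vector{x}^{\Vector{\alpha}(j)} / \lambda_j$. Using $\sum_j \lambda_j = 1$ and $\sum_j \lambda_j \Vector{\alpha}(j) = \Vector{\beta}$ this gives
\[
\sum_{j=0}^r b_{\Vector{\alpha}(j)} \Vector{x}^{\Vector{\alpha}(j)} \;=\; \sum_{j=0}^r \lambda_j \cdot \frac{b_{\Vector{\alpha}(j)} |\Vector{x}|^{\Vector{\alpha}(j)}}{\lambda_j} \;\geq\; \prod_{j=0}^r \left(\frac{b_{\Vector{\alpha}(j)}}{\lambda_j}\right)^{\lambda_j} \prod_{j=0}^r \left(|\Vector{x}|^{\Vector{\alpha}(j)}\right)^{\lambda_j} \;=\; \Theta_p \cdot |\Vector{x}|^{\Vector{\beta}}.
\]
Since $|b_{\Vector{\beta}} \Vector{x}^{\Vector{\beta}}| \leq |b_{\Vector{\beta}}| \cdot |\Vector{x}|^{\Vector{\beta}} \leq \Theta_p \cdot |\Vector{x}|^{\Vector{\beta}}$ whenever $|b_{\Vector{\beta}}| \leq \Theta_p$, the inner term can decrease $p$ by at most the AM--GM lower bound, so $p(\Vector{x}) \geq 0$.

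For the converse, case~(1) is immediate from the definition of a monomial square. Otherwise, the inner term fails to be a monomial square, meaning either $b_{\Vector{\beta}} < 0$ or some coordinate $\beta_i$ is odd; in either scenario one can pick signs $\epsilon_i \in \{\pm 1\}$ so that, evaluating at $\Vector{x} = (\epsilon_1 |x_1|, \ldots, \epsilon_n |x_n|)$, the inner term becomes $b_{\Vector{\beta}} \Vector{x}^{\Vector{\beta}} = -|b_{\Vector{\beta}}| \cdot |\Vector{x}|^{\Vector{\beta}}$ while each outer term is unaffected (because the $\Vector{\alpha}(j)$ are even). Assuming $|b_{\Vector{\beta}}| > \Theta_p$, I would produce a negativity witness $\Vector{x}^*$ by forcing equality in AM--GM, i.e.\ by making $b_{\Vector{\alpha}(j)} |\Vector{x}^*|^{\Vector{\alpha}(j)}/\lambda_j$ constant in $j$. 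Passing to logarithms reduces this to a linear system in $\log|x_i^*|$ whose coefficient matrix has full row rank because $\Vector{\alpha}(0),\ldots,\Vector{\alpha}(r)$ are affinely independent (they are vertices of the simplex $\New(p)$). Combined with the sign choice above, this yields $p(\Vector{x}^*) = (\Theta_p - |b_{\Vector{\beta}}|) \cdot |\Vector{x}^*|^{\Vector{\beta}} < 0$.

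The main obstacle I anticipate is the sign bookkeeping in the converse: one must verify that the dichotomy in the theorem aligns exactly with when the outer terms can remain nonnegative while the inner term is forced negative, i.e.\ that failure of (1) is equivalent to the existence of a suitable sign vector $\boldsymbol{\epsilon}$. A secondary technicality is that the log-linear system above is only directly solvable when $\dim \New(p) = n$; in lower-dimensional cases one first restricts attention to the linear span of $\{\Vector{\alpha}(j) - \Vector{\alpha}(0)\}_{j=1}^r$ and fixes the remaining coordinates arbitrarily, which does not affect the argument since the unused variables do not appear in $p$.
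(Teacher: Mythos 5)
The paper states this result purely as a citation of Iliman--de Wolff (Theorem 3.8 of \cite{Iliman:deWolff:Circuits}) and supplies no proof of its own, so there is no internal argument to compare against. Your proof --- weighted AM--GM over the outer terms with weights $\lambda_j$ for sufficiency, and for necessity a negativity witness obtained by choosing signs to force the inner term negative and solving the log-linear system for the AM--GM equality case (using affine independence of the $\Vector{\alpha}(j)$) --- is exactly the standard argument from the cited source, and it is correct, including the sign bookkeeping and the treatment of the case $\dim\New(p)<n$ that you flag as potential obstacles.
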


Note that $\Theta_p$ can be computed by solving a system of linear equations if it is known that $\Vector{\beta}$ is in the relative interior of $\New(p)$ and by solving an LP otherwise.

\begin{definition}
	We define for every $n,d \in \N$ the set of \struc{\emph{sums of nonnegative circuit polynomials} (SONC)} in $n$ variables of degree $2d$ as
	\begin{align*}
		\struc{C_{n,2d}} \ = \ \left\{f \in \R[\Vector{x}]_{n,2d} \ :\ f = \sum_{\rm finite}  p_i, \quad p_i \text{ is a nonnegative circuit polynomial} \right\}.
	\end{align*}
	\label{Def:SONC}
\end{definition}

We denote by SONC both the set of SONC polynomials and the property of a polynomial to be a sum of nonnegative circuit polynomials.

In what follows let $\struc{P_{n,2d}}$ be the cone of nonnegative $n$-variate polynomials of degree at most $2d$ and $\struc{\Sigma_{n,2d}}$ be the corresponding cone of sums of squares respectively. 
An important observation is, that SONC polynomials form a convex cone independent of the SOS cone:

\begin{theorem}[\cite{Iliman:deWolff:Circuits}, Proposition 7.2]
	$C_{n,2d}$ is a convex cone satisfying:
	\begin{enumerate}
		\item $C_{n,2d} \subseteq P_{n,2d}$ for all $n,d \in \N$,
		\item $C_{n,2d} \subseteq \Sigma_{n,2d}$ if and only if $(n,2d)\in\{(1,2d),(n,2),(2,4)\}$,
		\item $\Sigma_{n,2d} \not\subseteq C_{n,2d}$ for all $(n,2d)$ with $2d \geq 6$.
	\end{enumerate}
	\label{Thm:ConeContainment}
\end{theorem}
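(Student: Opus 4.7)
The plan is to handle the cone/convexity statement and the three containment claims in sequence, with (2) requiring two directions and (3) being a pure construction.

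\textbf{Cone and convexity.} First I would observe directly from Definition \ref{Def:SONC} that $C_{n,2d}$ is closed under sums (by concatenating the representations) and under multiplication by nonnegative scalars (absorbing the scalar into the coefficients of each summand while preserving the nonnegativity condition of \Cref{thm:CircuitPolynomialNonnegativity}, since the circuit number scales linearly with the outer coefficients and absorbs the same factor as the inner term). This gives the convex cone property without any real work.

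\textbf{Part (1).} A nonnegative circuit polynomial is, by definition, a nonnegative polynomial; sums of nonnegative polynomials are nonnegative. Hence $C_{n,2d} \subseteq P_{n,2d}$ for all $n,d$. This is immediate.

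\textbf{Part (2), "if" direction.} In each of the three listed cases one has the classical equality $P_{n,2d}=\Sigma_{n,2d}$: univariate polynomials of even degree ($n=1$), quadratic forms ($2d=2$), and bivariate quartics ($(n,2d)=(2,4)$), the last being Hilbert's theorem. Combined with Part (1), this forces $C_{n,2d} \subseteq \Sigma_{n,2d}$ in these cases.

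\textbf{Part (2), "only if" direction.} In every remaining pair $(n,2d)$ I would exhibit an explicit nonnegative circuit polynomial that is not a sum of squares. The Motzkin polynomial $M(x,y)=x^4y^2+x^2y^4-3x^2y^2+1$ is the canonical example for $(n,2d)=(2,6)$: it is a circuit polynomial (the exponent $(2,2)$ is the barycenter of the even outer exponents $(4,2),(2,4),(0,0)$) with circuit number $\Theta_M=1\geq |-3|/3$ after renormalisation, yet it is famously not SOS. To cover all remaining $(n,2d)$ with $n\geq 2$ and $2d\geq 6$ (except $(2,4)$), I would take an appropriate Motzkin-type circuit polynomial in two of the variables, then multiply by suitable even monomial squares $x_i^{2k}$ and/or add monomial squares to adjust the degree and arity while preserving the circuit structure and keeping the polynomial non-SOS (a standard trick: if $p\cdot q^2$ were SOS for $q$ a monomial, then $p$ would be SOS on the complement of the coordinate hyperplanes and hence SOS everywhere, contradicting the non-SOS property of $p$).

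\textbf{Part (3).} Here I need an SOS polynomial which is not a sum of nonnegative circuits. The obstacle a SONC polynomial faces is that the vertices of the Newton polytope of any nonnegative circuit summand must be even lattice points with positive coefficients (monomial squares), and each non-square term must sit in the interior of a simplex spanned by such squares. I would construct, for $(n,2d)=(2,6)$, a square like $s(x,y)^2$ where $s$ contains an odd monomial whose exponent lies on the boundary of the Newton polytope of $s^2$, so that the resulting SOS polynomial has a non-square term on the boundary of its Newton polytope; such a polynomial cannot be written as a sum of nonnegative circuit polynomials, since any SONC decomposition would have to match that boundary term, but SONC summands never place a non-square term on the boundary. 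A concrete candidate is $s=1+x+y+xy+\ldots$ chosen so that a cross term like $xy$ appears on the boundary of $\New(s^2)$. To lift to larger $(n,2d)$ I would multiply by even monomial squares in additional variables, which preserves both the SOS and the non-SONC properties.

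\textbf{Main obstacle.} The routine parts are (1), the "if" direction of (2), and the cone property. The real work lies in (a) producing a family of circuit polynomials covering every $(n,2d)$ with $n\geq 2$, $2d\geq 6$, $(n,2d)\neq(2,4)$, all of which are provably non-SOS, and (b) constructing an SOS polynomial whose Newton polytope support has a non-monomial-square term on the boundary, thereby ruling out any SONC decomposition. Both are combinatorial/geometric constructions rather than hard analysis, but getting a uniform construction that covers all remaining $(n,2d)$ pairs is the most delicate step.
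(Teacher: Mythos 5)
The paper does not actually prove this statement: it is imported verbatim from \cite{Iliman:deWolff:Circuits}, Proposition 7.2, and used as a black box, so there is no in-paper argument to compare yours against. Judged on its own, your outline handles the routine parts correctly (the cone property, part (1), and the ``if'' direction of (2) via Hilbert's classification of the cases where $P_{n,2d}=\Sigma_{n,2d}$), but there are two genuine gaps. In the ``only if'' direction of (2) you only cover $2d\geq 6$; the non-listed pairs also include $(n,4)$ for $n\geq 3$, where no degree-$6$ Motzkin-type witness lives in $C_{n,4}$. You need a degree-$4$ example such as the Choi--Lam polynomial $1+x^2y^2+y^2z^2+z^2x^2-4xyz$, which is a nonnegative circuit polynomial (the inner exponent $(1,1,1)$ is the barycenter of the four even outer exponents, with circuit number $4$) but is not a sum of squares.

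The more serious problem is in part (3): your key claim that a SONC decomposition ``never places a non-square term on the boundary'' of the Newton polytope is false as stated. A circuit summand may be supported entirely inside a proper face $F$ of $\New(f)$, and then its inner term lies in the relative interior of $F$, i.e.\ on the boundary of $\New(f)$; indeed the paper's \cref{algorithm:cover} routinely produces exactly such simplices, and its discussion of degenerate terms shows that boundary non-squares are often handled by SONC. The correct obstruction is the face-restriction property (the truncation of a SONC to any face of its Newton polytope is again a SONC), so you must exhibit an SOS polynomial one of whose face restrictions is provably not SONC --- and proving that a concrete nonnegative (e.g.\ univariate) polynomial admits no decomposition into nonnegative circuits is precisely the nontrivial step your sketch does not supply. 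Note also that (3) is asserted for all $n$ with $2d\geq 6$, including $n=1$, which your two-variable construction followed by multiplication with monomial squares cannot reach.
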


For further details about the SONC cone see \cite{deWolff:Circuits:OWR,Iliman:deWolff:Circuits, Dressler:Iliman:deWolff:Positivstellensatz}.

\subsection{Previous Methods for the Computation of SONC Decompositions}

Geometric programming was introduced in \cite{Duffin:Peterson:Zener:Book}. 
It is equivalent to a convex optimization problem. Applications include problems in circuit design problems, nonlinear network flow, and optimal control; for an overview see \cite{Boyd:Kim:Vandenberghe:TutorialOnGP, Boyd:Vandenberghe:ConvexOptimization}

\begin{definition}
	A function $p : \R_{>0}^n\to \R$ of the form $\struc{p(\Vector{z})} = p(z_1,\ldots,z_n) = cz_1^{\alpha_1}\cdots z_n^{\alpha_n}$ with $c > 0$ and $\alpha_i \in \R$ is called a \struc{\emph{monomial (function)}}. A sum $\struc{\sum_{i=0}^k c_iz_1^{\alpha_{1}(i)}\cdots z_n^{\alpha_{n}(i)}}$ of monomials with $c_i > 0$ is called a \struc{\emph{posynomial (function)}}.

	A \struc{\emph{geometric program} (GP)} has the following form:
	\begin{mini}
		{\Vector{z}\in\R_{>0}^n}
		{p_0(\Vector{z})}
		{\label{prob:mine}}
		{}
		\myCons{p_i(\Vector{z})}{\leq 1}{i=1,\ldots,N}
		\myCons{q_j(\Vector{z})}{= 1}{j=1,\ldots,M}
	\end{mini}
	where $p_0,\dots,p_m$ are posynomials and $q_1,\dots,q_l$ are monomial functions.
	\label{definition:GP}
\end{definition}

Geometric programs are convex and can hence be solved with interior point methods. In \cite{Nesterov:Nemirovskii}, the authors prove worst-case polynomial time complexity of this method; see also \cite[Page 118]{Boyd:Kim:Vandenberghe:TutorialOnGP}.

In \cite{Iliman:deWolff:GP} Iliman and the second author used GP and SONC to compute lower bounds for polynomials if their Newton polytope is a simplex.
For the case of an non-simplex Newton polytope, Dressler, Iliman and the second author propose the following heuristic approach to obtain lower bound for polynomials via SONC in \cite[page 20]{Dressler:Iliman:deWolff:SONCApproachConstraints}.
\begin{enumerate}
	\item Choose a triangulation $T_1,\ldots,T_\ell$ of the points corresponding to monomial squares.
	\item Choose weights $f_{\Vector{\beta},i}$ for all $(\Vector{\beta},i)\in \support{p}\times\{1,\ldots,\ell\}$ with $\Vector{\beta}\in T_i$, satisfying $\sum_{i=1}^\ell f_{\Vector{\beta},i}=f_{\Vector{\beta}}$.
	\item Define polynomials 
		\begin{align*}
			g_i \ = \ \sum_{\beta \in T_i\cap\support{p}} f_{\Vector{\beta},i} \Vector{x}^{\Vector{\beta}} \text{ for all $1\leq i\leq \ell$}
		\end{align*}
\end{enumerate}
Then each $\newton{g_i}$ is a simplex, so we can apply the previous approach.
However, they do not provide methods to obtain either a good triangulation or a good distribution of the weights, see \cref{example:DIdW16_5.5}.

\section{A Polynomial Time Algorithm for Computing SONC Certificates}
\label{section:describing_the_software}

Let $p=\polynomial{\Matrix{A}}{\Vector{b}}$ be some polynomial with $\Matrix{A}\in\N^{n\times t}$ and $\Vector{b}\in\R^t$.
In this section we describe an algorithm to find a lower bound for $p$.
We begin with recalling the special case where the Newton polytope forms a simplex and every point, which is not a vertex, lies in the interior.
Afterwards, we describe two ways for handling arbitrary polynomials.
The first approach splits the problem into several instances of the simplex case.
In the second approach we have to solve a single larger optimization problem, which is similar to the problem from the simplex case.
The first approach can be parallelized easily, while the second yields better results.
Our initial hypothesis was that the first approach would run significantly faster, which turned out not to be the case.
Regarding a comparison of run times, see \cref{table:split_strategy_time}.

As a first relaxation, every coefficient of a monomial non-square gets a negative sign.
Their exponents form the \struc{\emph{negative points}}, while the exponents of the monomial squares form the \struc{\emph{positive points}}.
This corresponds to the worst case where every possibly negative term is negative for the same argument.
Additionally, we can thus focus on the positive orthant.
For each negative point we select positive points, which form a simplex, that contains the negative point in its interior.
Next, we identify all other negative points, which lie in this simplex.
We continue this procedure until each negative point is covered by at least one simplex.
Then we fix a distribution of the negative coefficients and compute an optimal distribution of the positive coefficients via GP, such that each simplex is split into nonnegative circuit polynomials.
The adjustment we have to make for the constant term thus yields a lower bound for the polynomial.

\subsection{Computing the Newton polytope}
\label{subsection:convex_hull}

Recall that a point in a set $A \subset \R^n$ is \struc{\textit{extremal}} if and only if it cannot be written as a convex combination of the other points in $A$.
We need an efficient algorithm for deciding whether the Newton polytope corresponding to a given polynomial $p$ is a simplex.
This means, we have to find the extremal points of the set of exponents.
So, for every $\Vector{v} \in A(p)$ we ask for the feasibility of the following LP:
\begin{align}
	\begin{aligned}
		\label{LP:extremal}
		\sum_{\Vector u\in\support{p}\setminus\{\Vector v\}} \lambda_{\Vector{u}} \cdot \Vector{u} &= \Vector{v}\\
		\sum_{\Vector u\in\support{p}\setminus\{\Vector v\}} \lambda_{\Vector{u}} &= 1\\
		\lambda_{\Vector{u}} &\geq 0 \qquad\text{for all } \Vector{u}\in\support{p}\setminus\{\Vector{v}\}
	\end{aligned}
	\tag{LP-extremal}
\end{align}
which can be decided in polynomial time \cite{Khachiyan:LPPolytime}.
Solving this LP for every $\Vector{v}\in\support{p}$, computes $\vertices{p}$ in polynomial time.
Furthermore, the solutions of these LPs yield convex combinations of the interior points with respect to the vertices.
For later we need the following result, which is slightly stronger.
The lemma is folklore.
We, however, provide a short proof for convenience of the reader.

\begin{lemma}
	\label{lem:cover_reduction}
	For every non-extremal point $\Vector{v}\in \support{p}\setminus\vertices{p}$, we can efficiently compute affinely independent $\Vector{v}_0,...,\Vector{v}_m \in \vertices{p}$ with $m\leq n$ such that and $\Vector{v}\in\conv\left(\{\Vector{v}_0,...,\Vector{v}_m\}\right)$.
\end{lemma}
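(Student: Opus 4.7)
The plan is to reduce the statement to a standard fact about basic feasible solutions of linear programs. By the previous paragraph, we may assume $\vertices{p}$ has already been computed in polynomial time via the LPs \eqref{LP:extremal}. Since $\Vector{v}\in\support{p}\setminus\vertices{p}\subseteq\New(p)=\conv(\vertices{p})$, the feasibility LP
\begin{align*}
    \sum_{\Vector{u}\in\vertices{p}} \mu_{\Vector{u}}\,\Vector{u} \ = \ \Vector{v}, \quad \sum_{\Vector{u}\in\vertices{p}} \mu_{\Vector{u}} \ = \ 1, \quad \mu_{\Vector{u}}\geq 0 \text{ for all }\Vector{u}\in\vertices{p}
\end{align*}
is solvable, and by \cite{Khachiyan:LPPolytime} a solution can be obtained in polynomial time.

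First I would insist that the polynomial-time LP solver return a \emph{basic feasible solution} $\Vector{\mu}$ rather than an arbitrary one; this is standard (one can either run the simplex method starting from a BFS, or use a polynomial-time LP algorithm followed by the well-known polynomial-time rounding to a vertex of the feasibility polyhedron). The constraint matrix of the LP has exactly $n+1$ rows (one per coordinate of $\R^n$ plus the normalization $\sum \mu_{\Vector{u}}=1$), so any BFS has at most $n+1$ nonzero entries. Let $\Vector{v}_0,\ldots,\Vector{v}_m$ denote the vertices indexing those nonzero entries; then $m\leq n$ and $\Vector{v}\in\conv(\{\Vector{v}_0,\ldots,\Vector{v}_m\})$ by construction.

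It remains to verify affine independence of the selected vertices, and this is where I would be most careful. By the definition of a BFS, the columns of the constraint matrix corresponding to the basic (i.e., nonzero) variables are linearly independent. These columns are precisely the augmented vectors $\bigl(\Vector{v}_i,1\bigr)^T\in\R^{n+1}$ for $i=0,\ldots,m$. Linear independence of the augmented vectors $(\Vector{v}_i,1)^T$ is exactly the definition of affine independence of the points $\Vector{v}_0,\ldots,\Vector{v}_m$, so we are done.

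The main (mild) obstacle is ensuring the returned solution is a BFS: a generic interior-point solver returns a point in the relative interior of the optimal face, whose support can be larger than $n+1$. This is, however, a textbook issue: one can post-process in polynomial time by repeatedly pivoting to reduce the support until the columns indexed by the support are linearly independent, which is precisely the standard conversion of a feasible solution into a basic feasible solution and takes only polynomial time in $t$ and $n$.
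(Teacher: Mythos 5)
Your proof is correct and is essentially the paper's argument in different packaging: the paper's explicit step --- finding a nontrivial kernel vector $\mu$ of the augmented matrix, performing the ratio test $\eta=\min\{\lambda_i/\mu_i : \mu_i>0\}$, and iterating until the surviving augmented columns are linearly independent --- is precisely the standard conversion of a feasible LP solution into a basic feasible solution that you invoke, together with the same identification of affine independence of $\Vector{v}_0,\ldots,\Vector{v}_m$ with linear independence of the vectors $(\Vector{v}_i,1)$. The only cosmetic difference is that you set up the feasibility LP over $\vertices{p}$ directly, whereas the paper reuses \eqref{LP:extremal} over $\support{p}\setminus\{\Vector{v}\}$; both yield the same Carath\'eodory-type reduction in polynomial time.
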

\begin{proof}
	Solve \cref{LP:extremal} to obtain a convex combination $\Vector{v} = \sum_{i=0}^m\lambda_i \Vector{v}_i$.
	Assume $m>n$, and let $\mu$ be a non-trivial element in
	\begin{align}
		\label{Proof:LP:Matrix}
		\operatorname{ker}
		\begin{pmatrix}
			1 & \cdots & 1 \\
			\Vector{v}_0 & \cdots & \Vector{v}_m
		\end{pmatrix}
	\end{align}
	Assign $\eta = \min\left\{\frac{\lambda_i}{\mu_i}: i\leq m, \mu_i>0\right\}$ and choose coefficients $\lambda_i' = \lambda_i - \eta\mu_i$.
	Then $\Vector{v} = \sum_{i=0}^m (\lambda_i'-\eta\mu_i)\Vector{v}_i$ is a new convex combination but one of the coefficients vanished, so we obtain a shorter convex combination of $\Vector{v}$.
	We delete the corresponding column in the matrix \eqref{Proof:LP:Matrix} and iterate the process until we obtain a matrix with trivial kernel.
	Once this is the case the vectors $\Vector{v}_i-\Vector{v}_0$ are affinely independent.
	Note that this can only occur if $m\leq n$,
\end{proof}

\subsection{The SONC-Problem for Simplex Newton polytope}
\label{sec:simplex}

In this part, we assume $\support{p} = \vertices{p} \cup \interior{p}$ and that $\newton{p}$ is a simplex of dimension $\struc{h}\leq n$.
We recall how to compute SONC certificates using GPs in this case. This was first shown by Iliman and the second author in \cite{Iliman:deWolff:GP} generalizing an idea of Ghasemi and Marshall \cite{ghasemi_marshall_2012}, where the latter regard the problem only for the case that $\newton{p}$ is the standard simplex using theoretical results by Fidalgo and Kovacec \cite{Fidalgo:Kovacec}.

Let $\Matrix{\lambda}$ be the barycentric coordinates of $\interior{p}$ with respect to $\vertices{p}$, i.e. $\vertices{p}\cdot \Matrix{\lambda}= \interior{p}$ in matrix notation.
Note that $\Matrix{\lambda}$ is unique, since $\vertices{p}$ forms a simplex.
This means we write each interior point as a convex combination of the vertices of $\New(p)$.
In particular, we have $0< \lambda_{i,j}< 1$ for all $i,j$.
We define \struc{\GPopt{}} as the solution of the following GP.
\begin{mini}
	{}
	{\sum_{j=0}^{t-h-1} X_{0,j}}
	{\label{problem:sonc}\tag{SONC-simplex}}
	{\GPopt \ = \ }
	\myCons{\sum_{j=0}^{t-h-1}X_{i,j}}{\leq b_i}{1\leq i<h}
	\myCons{\prod_{i=0}^{h-1}\left(\frac{X_{i,j}}{\lambda_{i,j}}\right)^{\lambda_{i,j}}}{=-b_{h+j}}{j<t-h}
\end{mini}
Following \cref{thm:CircuitPolynomialNonnegativity}, the second set of constraints would have a ``$\geq$'', but for the optimum we always have equality.
Hence, we can use the above form, so we have a GP.

Assume that \GPopt{} is attained at $\Matrix{\GPsol}$ and let $\gamma=\GPopt-b_0$.
Then $\Matrix{\GPsol}$ is a nonnegativity certificate for $p+\gamma$ in the following way. 
For $0\leq j<t-h$ let 
\begin{align*}
	f_j \ = \ \sum_{i=0}^{h-1} \GPsol_{i,j} \cdot \Vector{x}^{\Vector{\alpha}(i)} + b_{h+j}\Vector{x}^{\Vector{\alpha}(h+j)}.
\end{align*}
Then each $f_j$ is nonnegative due to the second constraint and \cref{thm:CircuitPolynomialNonnegativity}.
Therefore,
\begin{align*}
	p+\gamma \ = \ \sum_{j=0}^{t-h-1} f_i \geq 0
\end{align*}
is a sum of nonnegative circuit polynomials. Thus, $-\gamma$ is a lower bound for $p$.

\medskip

Regarding the feasibility of the problem \cref{problem:sonc} we observe:

\begin{proposition}
	\label{lem:GP_feasible}
	The problem \cref{problem:sonc} is always feasible.
\end{proposition}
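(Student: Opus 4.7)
The plan is to exhibit an explicit feasible point of \cref{problem:sonc}, exploiting the fact that each variable $X_{0,j}$ appears only in the $j$-th product equality and in the objective, but not in any of the inequality constraints $\sum_{j}X_{i,j}\leq b_i$, whose indices start at $i=1$.

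Concretely, I would first set $X_{i,j}=b_i/(t-h)$ for all $1\leq i<h$ and $0\leq j<t-h$. By the simplex-case convention of \cref{sec:simplex}, the vertices of $\newton{p}$ are monomial squares, so each $b_i$ with $1\leq i<h$ is strictly positive; hence every such $X_{i,j}$ lies in $\R_{>0}$ and the inequality constraints $\sum_{j}X_{i,j}\leq b_i$ are satisfied with equality. Next, for each $j$ I would read the product equation
\[
\prod_{i=0}^{h-1}\left(\frac{X_{i,j}}{\lambda_{i,j}}\right)^{\lambda_{i,j}} \ = \ -b_{h+j}
\]
as a single equation in the one remaining unknown $X_{0,j}$. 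After the sign-flip of monomial non-square coefficients described at the beginning of \cref{section:describing_the_software}, the right-hand side is strictly positive, and the left-hand side is a strictly monotone continuous function of $X_{0,j}\in\R_{>0}$ because $\lambda_{0,j}>0$. Therefore there is a unique positive solution, obtained in closed form by solving for $X_{0,j}$, and taking $X_{0,j}$ to be this value completes a feasible tuple.

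The degenerate case $t=h$ (no interior points) is vacuous: the GP then has no variables, no inequality constraints, and an empty objective sum. I do not anticipate any serious obstacle beyond the observation that the product equalities for distinct $j$ are decoupled and each retains a free variable ($X_{0,j}$) that does not occur in any inequality constraint, so they can be solved independently.
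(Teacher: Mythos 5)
Your argument is correct and is essentially the paper's own proof: both exploit that each $X_{0,j}$ occurs in no inequality constraint, first satisfy the constraints $\sum_j X_{i,j}\leq b_i$ for $1\leq i<h$ (the paper by normalizing an arbitrary assignment, you by the explicit choice $X_{i,j}=b_i/(t-h)$, valid since the vertex coefficients are positive), and then solve each decoupled product equality in closed form for the remaining free variable $X_{0,j}$, using that $-b_{h+j}>0$ after the sign relaxation. Your explicit assignment together with the monotonicity observation is, if anything, a cleaner write-up of the same idea.
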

\begin{proof}
	Assume we have some assignment for $\Matrix{X}$.
	Then we get a feasible solution $\Matrix{X'}$ via
	\begin{align*}
		X'_{i,j} &= \frac{X_{i,j}}{1 - b_i +\sum_{j=0}^h X_{i,j}} &&\text{for $i<h$}\\
		X'_{0,j} &= \left(-b_{h+j}\cdot \prod_{i=1}^{h} \left(\frac{X'_{i,j}}{\lambda_i}\right)^{\lambda_i}\right)^{\frac{1}{\lambda_0}} &&
		\text{for $j<t-h$}
		\qedhere
	\end{align*}
\end{proof}

\subsection{The SONC-problem for Arbitrary Polynomials}
\label{subsection:arbitrary_newton}

In order to compute a lower bound for an arbitrary polynomial $p$, we cover $\New(p)$ by simplices and solve a variation of the problem previously discussed.

For each $\Vector{\alpha}\in\nonSquares{p}$ we have to find a simplex spanned by elements of $\monoSquares{p}$ such that $\Vector{\alpha}$ is contained in the interior of this simplex.
In this way we cover all elements of $\nonSquares{p}$ with simplices. Note that this covering is \emph{not} a triangulation in general.
Since all vertices of the Newton polytope are monomial squares by assumption (otherwise we have $p^* = -\infty$ trivially) such a cover exists.

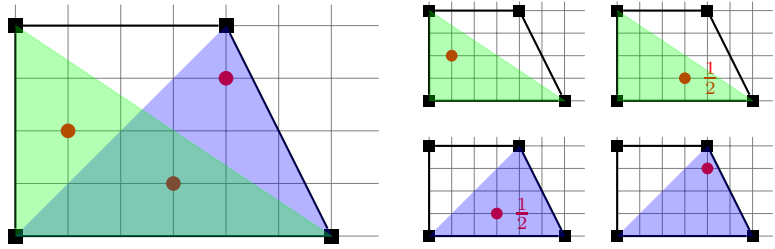
\begin{figure}[t]
	\ifpictures
	\newcommand{\halfweight}{\footnotesize{\textcolor{red}{$\frac{1}{2}$}}}
		\begin{tikzpicture}[square/.style={regular polygon,regular polygon sides=4}]
			\path[use as bounding box] (-4,2) rectangle (6,-2);
			\node at (-2,0) {
				\begin{tikzpicture}[scale = 0.7]
					\path[use as bounding box] (0,0) rectangle (7.5,5);
					\draw[help lines] (0,0) grid (6.9,4.4);
					\node[fill, square, scale = 0.5] (a0) at (0,0) {};
					\node[fill, square, scale = 0.5] (a1) at (6,0) {};
					\node[fill, square, scale = 0.5] (a2) at (4,4) {};
					\node[fill, square, scale = 0.5] (a3) at (0,4) {};
					\draw[thick] (a0) -- (a1) -- (a2) -- (a3) -- (a0);
					\node[fill, circle, scale = 0.5, red] (c1) at (3,1) {};
					\node[fill, circle, scale = 0.5, red] (c2) at (4,3) {};
					\node[fill, circle, scale = 0.5, red] (c3) at (1,2) {};

					\fill[color = blue, opacity=0.3, draw = blue]   (0,0) -- (6,0) -- (4,4);
					\fill[color = green, opacity=0.3, draw = green] (0,0) -- (6,0) -- (0,4);
				\end{tikzpicture}
			};
			\node at (2,0.8) {
				\begin{tikzpicture}[scale = 0.3]
					\path[use as bounding box] (0,0) rectangle (7.5,5);
					\draw[help lines] (0,0) grid (6.9,4.4);
					\node[fill, square, scale = 0.4] (a0) at (0,0) {};
					\node[fill, square, scale = 0.4] (a1) at (6,0) {};
					\node[fill, square, scale = 0.4] (a2) at (4,4) {};
					\node[fill, square, scale = 0.4] (a3) at (0,4) {};
					\draw[thick] (a0) -- (a1) -- (a2) -- (a3) -- (a0);
					\node[fill, circle, scale = 0.4, red] (c3) at (1,2) {};

					\fill[color = green, opacity=0.3, draw = green] (0,0) -- (6,0) -- (0,4);
				\end{tikzpicture}
			};
			\node at (4.5,0.8) {
				\begin{tikzpicture}[scale = 0.3]
					\path[use as bounding box] (0,0) rectangle (7.5,5);
					\draw[help lines] (0,0) grid (6.9,4.4);
					\node[fill, square, scale = 0.4] (a0) at (0,0) {};
					\node[fill, square, scale = 0.4] (a1) at (6,0) {};
					\node[fill, square, scale = 0.4] (a2) at (4,4) {};
					\node[fill, square, scale = 0.4] (a3) at (0,4) {};
					\draw[thick] (a0) -- (a1) -- (a2) -- (a3) -- (a0);
					\node[fill, circle, scale = 0.4, red, label=right:\halfweight] (c1) at (3,1) {};

					\fill[color = green, opacity=0.3, draw = green] (0,0) -- (6,0) -- (0,4);
				\end{tikzpicture}
			};
			\node at (2,-1) {
				\begin{tikzpicture}[scale = 0.3]
					\path[use as bounding box] (0,0) rectangle (7.5,5);
					\draw[help lines] (0,0) grid (6.9,4.4);
					\node[fill, square, scale = 0.4] (a0) at (0,0) {};
					\node[fill, square, scale = 0.4] (a1) at (6,0) {};
					\node[fill, square, scale = 0.4] (a2) at (4,4) {};
					\node[fill, square, scale = 0.4] (a3) at (0,4) {};
					\draw[thick] (a0) -- (a1) -- (a2) -- (a3) -- (a0);
					\node[fill, circle, scale = 0.4, red, label=right:\halfweight] (c1) at (3,1) {};

					\fill[color = blue, opacity=0.3, draw = blue]   (0,0) -- (6,0) -- (4,4);
				\end{tikzpicture}
			};
			\node at (4.5,-1) {
				\begin{tikzpicture}[scale = 0.3]
					\path[use as bounding box] (0,0) rectangle (7.5,5);
					\draw[help lines] (0,0) grid (6.9,4.4);
					\node[fill, square, scale = 0.4] (a0) at (0,0) {};
					\node[fill, square, scale = 0.4] (a1) at (6,0) {};
					\node[fill, square, scale = 0.4] (a2) at (4,4) {};
					\node[fill, square, scale = 0.4] (a3) at (0,4) {};
					\draw[thick] (a0) -- (a1) -- (a2) -- (a3) -- (a0);
					\node[fill, circle, scale = 0.4, red] (c2) at (4,3) {};

					\fill[color = blue, opacity=0.3, draw = blue]   (0,0) -- (6,0) -- (4,4);
				\end{tikzpicture}
			};
		\end{tikzpicture}
	\caption{Covering negative terms (red dots) with simplices (colored triangles) whose vertices are monomial squares (black squares).
 	If a vertex occurs in more than one simplex, then its coefficient is split evenly.}
	\label{figure:decomposition}
	\fi
\end{figure}

To compute the cover, we define the following auxiliary program
\begin{maxi}
	{}
	{\lambda_{\Vector{0}}}
	{\label{problem:cover}\tag{LPHull}}
	{\operatorname{LPHull}(\Vector{u}, \monoSquares{p})\ =\ }
	\myCons{\sum_{\Vector{v} \in \monoSquares{p}} \lambda_{\Vector{v}}\cdot \Vector{v}}{= \Vector{u}}{}
	\myCons{\sum_{\Vector{v} \in \monoSquares{p}} \lambda_{\Vector{v}}}{= 1}{}
	\myCons{\lambda_{\Vector{v}}}{\geq 0}{\Vector{v}\in \monoSquares{p}}
\end{maxi}

\begin{algorithm}
	The cover can be computed in polynomial time using the following algorithm.
	The idea is illustrated in \cref{figure:decomposition}.\\
	\INPUT{$\monoSquares{p}, \nonSquares{p}$: sets of points in $\R^n$}\\
	\OUTPUT{$(A^1,\ldots,A^\coverLength)$: sequence of sets of points, such that each $A^i\subseteq\monoSquares{p}$ forms the vertices of a simplex; and each $\Vector{u}\in\nonSquares{p}$ appears in the strict interior of at least one $A^i$.}
	\begin{algorithmic}[1]
		\Function{Cover}{\monoSquares{p}, \nonSquares{p}}
		\State $U := \nonSquares{p}$ \COMMENT{uncovered points}
		\State $k := 0$
		\While{$U \neq \emptyset$}
			\State Choose $\Vector{u}\in U$
			\State $k := k + 1$
			\State $\Vector{\lambda}^* := \operatorname{LPHull}(\Vector{u},\monoSquares{p})$ \COMMENT{solution vector via simplex method}
			\State $S := \{\Vector{v} \in \monoSquares{p}: \Vector{\lambda}^*_{\Vector{v}} > 0\}$ \COMMENT{This is a simplex.}
			\State $A^k := S \cup (\nonSquares{p} \cap \interior{\conv(S)})$
			\State $U := U \setminus A^k$
		\EndWhile
		\State \Return $(A^1, \ldots, A^k)$
		\EndFunction
	\end{algorithmic}
	\label{algorithm:cover}
\end{algorithm}
\begin{proof}
	For the running time, we observe:
	\begin{itemize}
		\item Solving \cref{problem:cover} with the simplex method ensures that $\Vector{\lambda}^*$ has at most $n+1$ non-zero entries, so $S$ is a simplex.
			In theory, however, this algorithm may need exponential time.
			Alternatively, we can solve the \cref{problem:cover} using some method with ensured polynomial running time.
			Then our solution $\Vector{\lambda}^*$ can contain more than $n+1$ non-zero entries and the resulting set $S'=\{\Vector{v} \in \monoSquares{p}: \Vector{\lambda}^*_{\Vector{v}} > 0\}$ is affinely dependent.
			In this case, we apply \cref{lem:cover_reduction}, which reduces $S'$ to an affinely independent set $S$ in polynomial time.
		\item To check whether some point lies in $\interior{\conv(S)}$, we have to solve a linear equation system.
			With QR-factorisation, we can thus compute $A^k$ in $\mathcal O\left(n^2\cdot (n+|\nonSquares{p}|)\right)$ steps.
		\item
			In each iteration of the while-loop we have $\Vector{u} \in \interior{\conv(S)}$, so at the end $\Vector{u} \in A^k$ holds.
			Therefore, at least one element is removed from $U$ in every iteration of the loop.
			Hence, the while-loop has at most $|\nonSquares{p}|$ iterations.
	\end{itemize}
	Each $A^k$ is a simplex with some interior points.
	Their vertices correspond to monomial squares of $p$, and each non-square is contained in at least one simplex.
	Thus, we have a polynomial time algorithm to cover all non-squares by simplices.
\end{proof}

\cref{algorithm:cover} yields a list of sets $A^1,\ldots,A^\coverLength$, where each $A^k$ represents exponents of a polynomial whose Newton polytope is a simplex.
Next we investigate how to split the coefficients in order to obtain nonnegative circuit polynomials. 
We describe two possible strategies.

\subsubsection{Even splitting of the coefficients}
\label{subsubsection:even_split}
For each $i$ where $b_i x^{\Vector{\alpha}(i)}$ is a monomial square, we count how many simplices contain $\Vector{\alpha}(i)$ and evenly distribute $b_i$.
So we define
\begin{align}
	b_i^k & \ = \ \frac{b_i}{\#\{j:\Vector{\alpha}(i) \in A^j\}} &
	p^k & \ = \polynomial{A^k}{\Vector{b}^k}
	\label{eq:split}
\end{align}
restricting $\Vector{b}^k$ to the exponents occurring in $A^k$.
For each $p^k$ we solve the corresponding GP \cref{problem:sonc} from \Cref{sec:simplex}.
If a polynomial $p^k$ does not contain a constant term, then we adapt the GP \cref{problem:sonc} and check for feasibility of
\begin{align}
	\begin{aligned}
		\label{problem:sonc_feasible}
		\sum_{j=0}^{t-h-1} X_{i,j}^k & \ \leq \ b_i^k && \text{for $i < h$}\\
		\prod_{i=0}^{h-1} \left(\frac{X_{i,j}^k}{\lambda_{i,j}^k}\right)^{\lambda_{i,j}} & \ = \ -b_{h+j}^k && \text{for $j<t-h$}
	\end{aligned}
	\tag{GP-Feasibility}
\end{align}
with optimum $\GPopt^k = 0$ if it is feasible and $\GPopt^k = \infty$ otherwise.
So with $\GPopt = \sum_{k=1}^\coverLength \GPopt^k$ we get $-\GPopt$ as a lower bound for the polynomial.

\subsubsection{Variable splitting of the coefficients}
\label{subsubsection:variable_split}
For $\Vector{\alpha}(i)\in\nonSquares{p}$ we compute $b_i^k$ as in \cref{eq:split}.
Then we solve the following GP
\begin{mini}
	{}
	{\sum_{k=1}^\coverLength \sum_{j=0}^{t-h-1} {X_{0,j}^k}}
	{\label{problem:sonc_var_split}\tag{GP-SONC}}
	{}
	\myCons{\sum_{k=1}^\coverLength \sum_{j=0}^{t-h-1} X_{i,j}^k}{ \ \leq \ b_i}{1\leq i< h,}
	\myCons{\prod_{i=0}^{h-1} \left(\frac{X_{i,j}^k}{\lambda_{i,j}^k}\right)^{\lambda_{i,j}^k}}{\ = \ -b_{h+j}^k}{j<t-h, 1\leq k \leq\coverLength.}
\end{mini}

The meaning of the variable is, that we use $X_{i,j}^k$ weight from $b_i$ to balance the negative weight $b_{h+j}^k$ in the simplex given by $A^k$.
So we fix a cover and a distribution of the negative weights and then search for the optimal distribution of the positive weights.

Fixing $b_{h+j}^k$ is necessary in this approach, since if we additionally searched for an optimal distribution of the negative coefficients, the resulting problem would no longer be a GP.

\newcommand{\opt}{\variableStyle{opt}}

\begin{algorithm}
	\label{algorithm:bound}
	The full algorithm to compute a lower bound for a polynomial via SONC is as follows.\\
	\INPUT{$p$: polynomial}\\
	\OUTPUT{$\opt$: float, such that $p+\opt$ is a SONC}
	\begin{algorithmic}[1]
		\Function{Bound}{$p$}
			\State compute $\newton{p}$ and $\vertices{p}$
			\If{$\vertices{p} \cap \nonSquares{p} \neq \emptyset$}
				\State \Return 'unbounded'
			\EndIf
			\If{$\newton{p}$ is simplex}
				\State $X := $ solution of \cref{problem:sonc}
			\Else
				\State $(A^1,\ldots,A^\coverLength) = \operatorname{Cover}(\monoSquares{p}, \nonSquares{p})$
				\State $b_i^k := \frac{b_i}{\#\{j:\Vector{\alpha}(i) \in A^j\}}$
				\State \COMMENT{according to split choice}
				\State solve \cref{problem:sonc_var_split} or several of \cref{problem:sonc} 
			\EndIf
			\State $\opt := \sum_i X_{0,i} - b_{\Vector{0}}$
			\State \Return $\opt$
		\EndFunction
	\end{algorithmic}
\end{algorithm}

\begin{theorem}
	\Cref{algorithm:bound} runs in polynomial time.
	\label{theorem:full_algo_poly_time}
\end{theorem}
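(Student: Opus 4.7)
The plan is to walk through the five logical blocks of \cref{algorithm:bound} and argue that each runs in time polynomial in the bit size of the input polynomial $p = \polynomial{\Matrix{A}}{\Vector{b}}$, where the relevant size parameters are $n$, $t = \#\support{p}$, and the bit length of the entries of $\Matrix{A}$ and $\Vector{b}$.

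First I would dispatch the cheap parts. Computing $\newton{p}$ and $\vertices{p}$ requires solving at most $t$ instances of \cref{LP:extremal}, each an LP with $t$ variables and $O(n+t)$ constraints with rational data, hence solvable in polynomial time by \cite{Khachiyan:LPPolytime}. Checking $\vertices{p}\cap\nonSquares{p} = \emptyset$ and testing whether $\newton{p}$ is a simplex are then purely combinatorial operations on at most $t$ points in $\R^n$. For the covering step in the non-simplex branch I would appeal to the running-time analysis already given with \cref{algorithm:cover}: the while-loop executes at most $|\nonSquares{p}|\leq t$ times, each iteration solves one \cref{problem:cover}, possibly reduces it via \cref{lem:cover_reduction}, and performs a polynomial-time linear algebra sweep to determine $A^k$. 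Consequently the cover length $\coverLength$ is bounded by $t$. The subsequent even split $b_i^k = b_i/\#\{j:\Vector{\alpha}(i)\in A^j\}$ requires $O(\coverLength\cdot t)$ arithmetic operations on rationals of polynomial bit size.

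Next I would handle the optimization step. In the simplex branch one solves a single instance of \cref{problem:sonc}, a geometric program with $O(ht)$ variables and $O(h+t)$ constraints, where $h=\dim\newton{p}\leq n$; in the split branch one either solves $\coverLength$ feasibility instances \cref{problem:sonc_feasible}, each of size polynomial in $n$ and $t$, or one \cref{problem:sonc_var_split} with $O(\coverLength\cdot n\cdot t)$ variables and $O(h+\coverLength\cdot t)$ constraints. Since $\coverLength\leq t$, all encountered GPs have size polynomial in the input. Via \cite{Nesterov:Nemirovskii}, geometric programs can be solved to any fixed target accuracy by interior-point methods in time polynomial in the problem size, provided a strictly feasible starting point is available. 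I would invoke \cref{lem:GP_feasible} (and the analogous constructive feasibility argument transferred to \cref{problem:sonc_var_split} coordinate-wise for each $k$) to produce such a starting point in polynomial time from any positive assignment of the $X_{i,j}^k$.

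Combining the blocks, the total running time is polynomial in $n$, $t$, and the bit size of the coefficients. The final update $\opt := \sum_i X_{0,i} - b_{\Vector 0}$ is a single additional arithmetic operation.

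The main obstacle I anticipate is the bookkeeping required to certify that the internal sizes propagate polynomially: one must argue that the barycentric coordinates $\lambda_{i,j}^k$ appearing in the exponents of \cref{problem:sonc_var_split} are rational numbers of polynomially bounded bit length (they are solutions of linear systems with entries drawn from $\Matrix{A}$), and that $\coverLength$ itself does not blow up beyond $|\nonSquares{p}|$. Both points are already implicit in the proof of \cref{algorithm:cover} and in the unique solvability of the barycentric systems for simplex Newton polytopes, so the argument closes cleanly.
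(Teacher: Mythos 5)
Your proposal is correct and follows essentially the same route as the paper's proof: bound the cover length by $t$ via \cref{algorithm:cover}, conclude that all LPs and GPs have polynomially bounded size, and invoke polynomial-time solvability of LPs and of geometric programs via interior-point methods. You simply supply more detail (explicit problem sizes, the strictly feasible starting point from \cref{lem:GP_feasible}, and the bit-length of the barycentric coordinates) than the paper's brief argument.
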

\begin{proof}
	As shown in \cref{algorithm:cover}, we have $\coverLength\leq t$.
	Hence, the size of the GP is polynomially bounded in the input size.
	Furthermore, computing the convex hull and solving a GP can be done in polynomial time, see \Cref{subsection:convex_hull} and \cite{Nesterov:Nemirovskii}.
	Therefore, this algorithm computes a lower bound for a polynomial and requires time polynomial in the input size.
\end{proof}

Additionally, the algorithm shows that SONC is in particular well-suited for polynomials of high degree.
\begin{theorem}
	The running time for SONC is independent of the degree in the sense that the following numbers are independent of the degree:
	\begin{itemize}
		\item the sizes of the LPs to compute the Newton polytope,
		\item the sizes of the LPs in \cref{algorithm:cover},
		\item the number of arithmetic operations, to compute the barycentric coordinates,
		\item the problem size of each type of GP we have.
	\end{itemize}
	\label{theorem:degree_independent}
\end{theorem}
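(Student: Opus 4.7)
My plan is to verify each of the four listed items separately by inspecting the corresponding LPs and GPs described earlier in the paper, and to observe that in every case the size parameters involve only the number of variables $n$ and the number of support points $t$, but never the degree $2d$. The underlying reason is that SONC reasons directly about the (sparse) support $A(p) \subset \N^n$ rather than about the ambient polynomial ring $\R[\Vector{x}]_{n,2d}$, in contrast to the SOS/SDP formulation \cref{problem:prob_sos} whose matrix variable has size $\binom{n+d}{d}$.

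For item (1), the LP \cref{LP:extremal} used in \Cref{subsection:convex_hull} has one nonnegative variable $\lambda_{\Vector{u}}$ per non-chosen support point and exactly $n+1$ equality constraints (the $n$ equations fixing $\Vector{v}$ as a convex combination plus the normalisation). So its dimensions are governed by $n$ and $t$ alone. For item (2), the LP \cref{problem:cover} in \cref{algorithm:cover} has $|\monoSquares{p}|\leq t$ variables and again $n+1$ equality constraints; the same applies to the affine-independence reduction in \cref{lem:cover_reduction}, where the kernel computation \eqref{Proof:LP:Matrix} is done on an $(n+1)\times (m+1)$ matrix with $m\leq n$.

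For item (3), once a simplex $A^k$ with vertex set of size $h+1\leq n+1$ has been fixed, the barycentric coordinates of an interior point are obtained by solving an $(n+1)\times(n+1)$ linear system (as noted after \cref{thm:CircuitPolynomialNonnegativity}). The number of arithmetic operations is therefore $O(n^3)$, with no dependence on $d$. For item (4), I will read off the dimensions of \cref{problem:sonc} and \cref{problem:sonc_var_split}: the simplex GP has a matrix variable $\Matrix{X}$ of format $h \times (t-h-1)$ with $h \leq n$, and $h$ inequality plus $t-h$ equality constraints; the variable-splitting GP multiplies this by the number of simplices, which is bounded by $\coverLength \leq t$ from the analysis of \cref{algorithm:cover}. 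In all cases the sizes are polynomial in $n$ and $t$ and do not see $d$ at all.

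The main (and only) subtle point I anticipate is item (4), because a priori one might worry that the exponents $\Vector{\alpha}(i)$ appear as numerical data inside the GP and hence enlarge it when $d$ grows. I will address this by emphasising that these exponents enter the GP \cref{problem:sonc} only through the precomputed barycentric coordinates $\lambda_{i,j}$, so the \emph{problem size} (number of variables and constraints, and the length of each constraint) is a function of $n$, $h$, $t$ and $\coverLength$ only. With that remark in place the four assertions reduce to inspection, and the theorem follows.
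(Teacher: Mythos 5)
Your proposal is correct and follows essentially the same route as the paper's proof, namely bounding the dimensions of \cref{LP:extremal}, \cref{problem:cover}, the barycentric-coordinate computations, and the GPs \cref{problem:sonc} and \cref{problem:sonc_var_split} purely in terms of $n$, $h$, $t$ and the cover length, none of which involve $d$. Your extra remark that the exponents enter the GP only through the precomputed barycentric coordinates $\lambda_{i,j}$ is a useful clarification that the paper leaves implicit, but it does not change the argument.
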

\begin{proof}
	The size of both \cref{LP:extremal} and\cref{problem:cover} is in $\mathcal O(nt)$.
	We have to compute $(t-h)\cdot\coverLength\leq t^2$ tuples of barycentric coordinates, each of size $n+1$.
	The largest GP \cref{problem:sonc_var_split} has size $\mathcal O(h(t-h)\coverLength)\subseteq\mathcal O(t^3)$.
	So none of these depends on $d$.
\end{proof}

\subsection{Limits of the Algorithm}

Here, we address cases, where \cref{algorithm:bound} may fail to find a bound, although the polynomial is bounded.
The key concept is a \struc{\textit{degenerate term}}, which is defined as a term $b_{\Vector{\alpha}}\Vector{x}^{\Vector{\alpha}}\in\nonSquares{p}$, such that $\Vector{\alpha}\in\partial\newton{p}$, and in the covering there is a simplex $A^k$ with $\Vector{\alpha}\in A^k$ but $\Vector{0}\notin A^k$.
Then $\Vector{\alpha}$ is called a \struc{\textit{degenerate point}}.

The main issue is, that adding weight to the constant term does not influence non\-ne\-ga\-ti\-vi\-ty of the circuit polynomial with support $A^k$.
Hence, we can use at most the given weights of the other positive points in $A^k$.
If some degenerate term forces us to use the entire weight for every positive term, then there might not be enough weight left to even out other negative terms.

\begin{example}
	\label{example:degenerate}
	As example take
	\begin{align*}
		p = x^2-2xy + y^2 -2x-2y+1 = (x+y-1)^2\geq 0
	\end{align*}
	The only covering is $\left(\{(0,2),(1,1),(2,0)\}, \{(2,0),(1,0),(0,0)\},\{(0,2),(0,1),(0,0)\}\right)$.
	To make the first circuit nonnegative, we have to use the entire coefficients of $x^2$ and $y^2$, to get the nonnegative circuit polynomial $x^2-2xy+y^2$.
	Then however, we have to decompose $-2x-2y+1$ as a SONC, which is not possible, since the expression is unbounded.
\end{example}

\section{Experimental Results}
\label{section:running_test_cases}

We discuss the actual physical running time of the algorithms described above.
First, we describe the setup of our experiment and explain how our random instances were created.
Afterwards, we  discuss a few selected examples, which exhibit well the differences of the methods. 
In the end, we present how the program behaved on a large set of examples.

\subsection{Experimental Setup}
\label{subsection:experimental:setup}

We give an overview about the experimental setup.

\begin{description}
	\item[Software] The entire experiment was steered by our \textsc{Python} based software \textsc{POEM} (Effective Methods in Polynomial Optimization), \cite{poem:software}, which we develop since July 2017. 
	\textsc{POEM} is open source, under GNU public license, and freely available at:
		\begin{center}
			\url{https://www3.math.tu-berlin.de/combi/RAAGConOpt/poem.html}
		\end{center}
		For our experiment, \textsc{POEM} calls a range of further software and solvers for computing both SONC and SOS certificates.
		For SONC, on the one hand, we use \textsc{CVXPY} \cite{cvxpy} with the solver \textsc{ECOS} \cite{ecos}.
		On the other hand, we solve the problems with \textsc{Matlab}, using \textsc{CVX} \cite{cvx_article}, calling the solvers \textsc{SDPT3} \cite{sdpt3} and \textsc{SeDuMi} \cite{sedumi}.
		For SOS, we use the established packages \textsc{YALMIP} \cite{yalmip}, \textsc{Gloptipoly} \cite{gloptipoly} and \textsc{SOSTOOLS} \cite{sostools}, the latter both with and without the ``sparse'' option.
		Furthermore, we provide our own implementation to construct the SDP for SOS.
		For \textsc{Matlab} we use \textsc{CVX} with \textsc{SDPT3} and \textsc{SeDuMi}, and for Python we use \textsc{CVXPY} with \textsc{CVXOPT} \cite{Andersen:Dahl:Vandenberghe:CVXOPT}.
	\item[Investigated Data] The experiment was carried out on a database containing \instances{} polynomials with a wide range of variables, terms, degrees, and Newton polytopes. 
		We created the database randomly using \textsc{POEM}. 
		Further details can be found in \cref{subsection:generating_polynomials}; the full database of instances is available at the homepage cited above.
	\item[Hardware and System] We used a \verb+Intel(R) Core(TM) i7-6700 CPU+ with 3.4 GHz, 4 cores, 8 threads and 16 GB of RAM under openSUSE Leap 42.3 for our computations.
	\item[Stopping Criteria] 
		For our own methods in \textsc{Python} we used a tolerance of $\varepsilon=10^{-7}$.
		For all computations in \textsc{Matlab} we used the default accuracy $\varepsilon=1.49\text{e-8}$ and $1.22\text{e-4}$ for inaccurate solutions.
		To avoid memory errors, we call SOS only if the size of the matrix lies below a certain bound.
		For Python we check $\binom{n+d}{d}\leq120$ and for Matlab $\binom{n+d}{d}\leq400$.
		If $400<\binom{n+d}{d}\leq 1000$, we only call SOSTOOLS with the ``sparse'' option.
		To avoid excessive run times, we call call SONC with variable split in Matlab only if $t(n+1)<3000$.
		We obtained all of these thresholds experimentally.
	\item[Runtime and Memory]
		The overall running time for all our instances was 28 days and created about 23 GB of data, mainly consisting of the certificates of nonnegativity.
\end{description}

\subsection{Generating Polynomials}
\label{subsection:generating_polynomials}

In what follows we describe how we generated our examples. As parameters for the input size we define by
\begin{description}
	\item[\struc{$n$}] the number of variables, taking values $n=2,3,4,8,10,20,30,40$,
	\item[\struc{$d$}] the degree, taking values $d=6,8,10,20,30,40,50,60$,
	\item[\struc{$t$}] the number of monomials, taking values $t=6,9,12,20,24,30,50,100,200,300,500$,
	\item[\struc{$\variableStyle{inner}$}] a lower bound for the number of summands whose exponent is not a vertex of the Newton polytope (only for the case ``arbitrary'' below). 
\end{description}

We created random polynomials of the following three classes.
\begin{description}
	\item[standard simplex] We have $\vertices{p} = \{\Vector 0, d\mathbf{e}_1,\ldots,d\mathbf{e}_n\}$ and $t-n-1$ many elements in $\interior{p}$, which we choose randomly using an even distribution over the lattice points of the interior of the scaled standard simplex. 
	\item[simplex] \label{item:generate_simplex}
		We put $\Vector{v}_0=\Vector 0$ and randomly choose by even distribution $n$ points from the scaled standard simplex $\standardSimplex{d/2}{n}$ and double their entries.
		These points form the vertex set $\vertices{p} = \{\Vector{v}_0,\Vector{v}_1,\ldots,\Vector{v}_n\}$.
		Afterwards we choose a random $\Vector{\lambda}\in[0,1]^{n+1}$ and normalize it to $| \Vector{\lambda}|_1 = \sum_{i = 0}^n \lambda_i = 1$.
		Then we compute $\Vector{v}$ as element wise rounding of $\left(\sum_{i=0}^n \lambda_i v_i\right)$ and check whether $\Vector{v}\in\operatorname{int}(\operatorname{conv}\{v_0,\ldots,v_n\})$.
		If that is the case, then we add $\Vector{v}$ to the set $\support{p}$.
		We iterate this process until we reach $\#\support{p}=t$ or some threshold of iterations (in which case the generation fails).
	\item[arbitrary] 
		We put $\Vector{v}_0=\Vector 0$ and randomly choose by even distribution $t-\variableStyle{inner}-1$ points from the scaled standard simplex $\standardSimplex{d/2}{n}$ and double their entries.
		All further vertices are chosen as in the simplex case, only as convex combination of $\{\Vector{v}_0,\Vector{v}_1,\ldots,\Vector{v}_{t-\variableStyle{inner}-1}\}$ (and thus may fail as well, if does not finish after a certain number of iterations).
		This way, we have ensured, that we have at least $\variableStyle{inner}$ points in $\support{p}\setminus\vertices{p}$..
\end{description}

We compute the convex hull of the set of chosen points (trivial in both simplex cases).
Let $h$ be the number of vertices of the hull.

Finally, we create an array of length $h$, normally distributed from $\mathcal N\left(0,\left(\frac{t}{n}\right)^2\right)$, take the absolute value of each entry, and concatenate this with an array of length $t-h$ normally distributed from $\mathcal N\left(0,1\right)$.
This is the coefficient vector $\Vector{b}$.
We chose standard deviation $\frac{t}{n}$ to keep the lower bounds in a reasonable range.

For each combination of parameters $(n,d,t)$ we ran the procedure with 10 different seeds and all three of the above shapes.
For the case ``arbitrary'', we additionally take $\variableStyle{inner}=\frac{k}{5}(t-n-1)$ for $k=1,2,3,4$.
In the end, we created \instances{} instances.

The limit of our implementation lies in the range, the random number generator can handle.
In our setup, the methods ``simplex'' and ``arbitrary'' fail for $n=40$ and $d=60$.
To pick evenly distributed vectors, we choose random numbers in the range of $0$ to $\binom{40+60}{40}$, which becomes too large for the underlying C data type to handle.

\subsection{Discussion of Selected Examples}
\label{subsection:selected_examples}

Before we present the outcome of the entire experiment, we discuss a couple of chosen examples.
In each table, ``\struc{opt}'' denotes the optimal value \GPopt{} or \SOSopt{}, according to the chosen strategy SONC or SOS respectively.

Since the polynomials discussed in the \Cref{example:standard_simplex,example:general_newton,example:dwarfed_cube} are very huge, we do not explicitly state them in the article. They are available online via

\begin{center}
	\url{https://www3.math.tu-berlin.de/combi/RAAGConOpt/comparison_paper/}
\end{center}

\begin{example}[Improved bounds with \cref{algorithm:bound}]
	We start with Example 5.5 from \cite{Dressler:Iliman:deWolff:SONCApproachConstraints}.
		\begin{align*}
			p = 1 + 3\cdot x_0^{2} x_1^{6} + 2\cdot x_0^{6} x_1^{2} + 6\cdot x_0^{2} x_1^{2} - 1\cdot x_0^{1} x_1^{2} - 2\cdot x_0^{2} x_1^{1} - 3\cdot x_0^{3} x_1^{3}
		\end{align*}
		In this paper, the authors achieve a lower bound $0.5732$ for $p$ using SONC with even split of coefficients and a bound $0
.6583$ with a different, arbitrarily chosen split.
		Using the same cover, as given in the paper, our \cref{algorithm:bound} yields the optimum $\variableStyle{opt} = -0.693158$, so we have $0.693158$ as an improved lower bound for $p$ given by SONC.
		\label{example:DIdW16_5.5}
\end{example}

\begin{example}[Range of runtimes of solvers]
	We consider a polynomial whose Newton polytope is a standard simplex with $n=10$, $d=30$, $t=200$.
	Running time and results are shown in \cref{table:example_standard_simplex}.
	All three solvers arrive at the same optimum, and ECOS is by far the fastest method.
	Due to the problem size, we did not attempt to compute a bound using SOS.
	\begin{table}[t]
		\begin{tabular}{lllrr}
		\hline
		language   & strategy   & solver   &   time (s) &     opt \\
		\hline
		python     & sonc       & ECOS     &   0.41     & 1109.45 \\
		matlab     & sonc       & sedumi   &  27.24     & 1109.45 \\
		matlab     & sonc       & sdpt3    & 153.09     & 1109.45 \\
		\hline
		\end{tabular}
		\caption{Standard simplex Newton polytope, $n=10$, $d=30$, $t=200$}
		\label{table:example_standard_simplex}
	\end{table}
	\label{example:standard_simplex}
\end{example}

\begin{example}[Numerically stable via SONC/GP but unstable via SOS/SDP]
	We consider the following randomly generated polynomial, whose Newton polytope is a non-standard simplex, and which satisfies $n=5$, $d=8$, $t=10$.
	{\footnotesize
		\begin{align*}
			p \ = \ &1.9450715782850738 + 4.267736494409075\cdot x_3^{2} x_4^{2} + 0.8128309873524124\cdot x_2^{8} \\
			&+ 0.3863534030996798\cdot x_1^{4} x_2^{2} x_3^{2} + 1.5114805777890852\cdot x_1^{6} x_4^{2} + 1.07826527350598\cdot x_0^{6} x_1^{2} \\
			&- 0.7496903447028966\cdot x_0^{1} x_1^{2} x_2^{1} x_3^{1} x_4^{1} + 0.0328087476137118\cdot x_0^{1} x_1^{3} x_2^{1} x_3^{1} x_4^{1} \\
			&- 2.5827966329699446\cdot x_0^{1} x_1^{1} x_2^{2} x_3^{1} x_4^{1} - 1.1539503636520094\cdot x_0^{2} x_1^{1} x_2^{1} x_3^{1} x_4^{1}
		\end{align*}
	}
	The results of our computation are shown in \cref{table:example_simplex}.
	For SOS, numerical issues occur. Only \textsc{Sedumi}, \textsc{Gloptipoly}, and \textsc{SOSTOOLS} returned a solution.
	Furthermore, \textsc{Gloptipoly}'s solution violates the constraints by more than $\varepsilon=10^{-7}$, as denoted by $-1$ in the column ``\struc{verify}''.
	\textsc{SOSTOOLS} returned an answer, which is far higher than the other two SOS bounds, likely caused by a numerical instability.
	But the main observation is that SONC does not have any numerical issues and (therefore) yields a dramatically better bound than SOS, which we could obtain with three different solvers.
	Again, ECOS is by far the fastest solver.
	\begin{table}[t]
		\begin{tabular}{lllrrr}
			\hline
			language   & strategy   & solver             &   time (s) &        opt &   verify \\
			\hline
			matlab     & sos        & sostools, sparse   &   0.23     &  inf       &       -1 \\
			matlab     & sos        & sostools           &   7.94     & 4633.91    &        1 \\
			matlab     & sos        & yalmip             &   3.22     &  inf       &       -1 \\
			matlab     & sos        & gloptipoly         &   9.91     &   68.4777  &       -1 \\
			matlab     & sos        & sedumi             &  19.70     &   52.1625  &        1 \\
			matlab     & sos        & sdpt3              &  18.62     &  nan       &       -1 \\
			python     & sos        & CVXOPT             & 998.47     &  inf       &       -1 \\
			\hline
			matlab     & sonc       & sdpt3              &   2.12     &   -4.24914 &        1 \\
			matlab     & sonc       & sedumi             &   0.99     &   -4.24914 &        1 \\
			python     & sonc       & ECOS               &   0.01     &   -4.24914 &        1 \\
			\hline
		\end{tabular}
		\caption{\cref{example:simplex}: The Newton polytope is a simplex; parameters: $n=5$, $d=8$, $t=10$.}
		\label{table:example_simplex}
	\end{table}
	\label{example:simplex}
\end{example}

\begin{example}[A huge polynomial with non-simplex Newton polytope]
	\label{example:general_newton}
	As in \cref{example:standard_simplex} we consider a polynomial with $n=10$, $d=30$, $t=200$. Here, however, the Newton polytope is no simplex but has an arbitrary shape.
	We have the additional parameter $\variableStyle{inner} = 100$; i.e., we have $100$ monomial squares and $100$ possibly negative terms.
	This increases the difficulty of the problem, since we first have to compute a cover of the Newton polytope with simplices, as described in \cref{algorithm:cover}.
	We present our results in \cref{table:example_general}.
	In \textsc{Matlab}, splitting the coefficients evenly and solving many instances of the simplex case greatly reduces the running time.
	For Python the difference is much smaller.
	As expected, for both solvers the variable split yields a better result.
	\begin{table}[t]
		\begin{tabular}{llllrr}
			\hline
			language	 & strategy   & solver  & splitting &  time (s) &      opt \\
			\hline
			matlab	   & sonc       & sedumi  & even      &  25.46    & -16.4538 \\
			matlab	   & sonc       & sedumi  & variable  & 124.69    & -16.535  \\
			python	   & sonc       & ECOS    & even      &   3.88    & -16.4538 \\
			python	   & sonc       & ECOS    & variable  &   4.79    & -16.535  \\
			\hline
		\end{tabular}
		\caption{Example \ref{example:general_newton}: Polynomial with general Newton polytope, $n=10$, $d=30$, $t=200$, $\variableStyle{inner} = 100$}
		\label{table:example_general}
	\end{table}
\end{example}

\begin{example}[A polynomial with combinatorially challenging Newton polytope]
	\label{example:dwarfed_cube}
	The \struc{\textit{dwarfed cube}} is a polytope, that exhibits problematic behavior from a combinatorial point of view, in the sense, that it causes many convex hull algorithms to perform badly \cite{Avis:Bremner:Seidel:ConvexHullAlgorithms}.
	So we want to see, how our algorithms behave on a polynomial whose Newton polytope is the dwarfed cube.
	We choose a full support for the polynomial in the sense that every lattice point of the dwarfed cube occurs in the support.

	We show the results in \cref{table:example:dwarfed_cube}.
	SOS and SONC differ by less than $1\%$, but again SONC with ECOS is the fastest method, this time by a factor 4 compared to the fastest SOS-method YALMIP.
	\begin{table}[t]
		\begin{tabular}{llllrrr}
			\hline
			language	 & strategy   & solver           & splitting &   time (s)&      opt\\
			\hline
			matlab	   & sos        & gloptipoly       &           &  8.76     & -28.3181\\
			matlab	   & sos        & yalmip           &           &  4.02     & -28.3181\\
			matlab	   & sos        & sostools, sparse &           &  7.98     & -28.3181\\
			matlab	   & sos        & sostools         &           &  8.27     & -28.3181\\
			matlab	   & sos        & sdpt3            &           & 11.87     & -28.3181\\
			matlab	   & sos        & sedumi           &           & 17.69     & -28.3181\\
			python	   & sos        & CVXOPT           &           &317.85     & -28.3181\\
			\hline
			matlab	   & sonc       & sedumi           & evenly    &  4.16     & -28.246 \\
			python	   & sonc       & ECOS             & evenly    &  1.07     & -28.246 \\
			python	   & sonc       & ECOS             & variable  &  0.90     & -28.2777\\
			\hline
		\end{tabular}
		\caption{Example \ref{example:dwarfed_cube}: Polynomial supported over the full 7-dimensional dwarfed cube, scaled by a factor 4, as support. Corresponding parameters: $n=7$, $d=6$, $t=113$, $\variableStyle{inner} = 63$}
		\label{table:example:dwarfed_cube}
	\end{table}
\end{example}

\subsection{Evaluation of the Experiment}
\label{subsection:evaluation}

In this section we present and evaluate the results of our experiment. 

\begin{description}
	\item[The running time of SONC is independent of the degree] 
		This fact was observed in examples in \cite{Dressler:Iliman:deWolff:SONCApproachConstraints} (see also further reference there in).
		We proved it more formally in \cref{theorem:degree_independent}, but it left open the possibility that the degree influences the number of iterations in the LPs or GPs.
		Here, we confirm experimentally that the running time for SONC is practically not affected by the degree.
		We investigate our test cases of arbitrary Newton polytope with $n=4$, $t=20$, using \cref{problem:sonc_var_split} solved by ECOS.
		The results are shown in \cref{table:degree_independent}.
		\begin{table}
			\begin{tabular}{ccrc|c|ccrc}
				degree & inner\_terms& time & instances &&
				degree & inner\_terms& time & instances \\
				\hline
				40	   & 3          & 0.100& 10        &&6      & 3          & 0.138& 9         \\
				50	   & 3          & 0.101& 10        &&8      & 3          & 0.140& 9         \\
				30	   & 3          & 0.105& 10        &&10     & 6          & 0.140& 10        \\
				60	   & 3          & 0.106& 10        &&10     & 3          & 0.140& 10        \\
				60	   & 9          & 0.113& 10        &&40     & 12         & 0.142& 10        \\
				50	   & 6          & 0.115& 10        &&50     & 12         & 0.143& 10        \\
				20	   & 3          & 0.115& 10        &&30     & 12         & 0.144& 10        \\
				60	   & 6          & 0.115& 10        &&10     & 9          & 0.152& 10        \\
				30	   & 6          & 0.116& 10        &&8      & 6          & 0.153& 10        \\
				50	   & 9          & 0.120& 10        &&60     & 12         & 0.153& 10        \\
				20	   & 6          & 0.124& 10        &&10     & 12         & 0.155& 10        \\
				20	   & 9          & 0.127& 10        &&6      & 6          & 0.159& 8         \\
				40	   & 6          & 0.128& 10        &&8      & 9          & 0.164& 10        \\
				30	   & 9          & 0.134& 10        &&6      & 9          & 0.175& 8         \\
				40	   & 9          & 0.134& 10        &&8      & 12         & 0.181& 10        \\
				20	   & 12         & 0.137& 10        &&6      & 12         & 0.205& 8         \\
			\end{tabular}
			\caption{Timing for $n=4$, $t=20$, arbitrary Newton polytope, using ECOS, ordered by time.
			We see that the running time is independent of the degree.}
			\label{table:degree_independent}
		\end{table}
		Hence, for the following observations, we mainly regard dependencies on the number of variables and terms, and the shape of the Newton polytope.
	\item[SONC behavior on the standard simplex] We show how SONC behaves on the scaled standard simplex, which is the corresponding Newton polytope in the common approach to investigate (dense) polynomials in $n$ variables of degree $d$.
		The results are in presented \cref{table:standard_simplex}.
		Even for instances as large as 40 variables and 500 terms, we can solve the problem in a few seconds. 
		Note that we only consider instances with $d > n$, since otherwise the interior of the Newton polytope is empty.
		\begin{table}[t]
		\begin{tabular}{c|cccccccc}
			$t\setminus n$&2&3&4&8&10&20&30&40\\
			\hline
			6&0.054&0.048&0.048&-&-&-&-&-\\
			9&0.07&0.066&0.063&-&-&-&-&-\\
			12&0.088&0.083&0.084&0.071&0.116&-&-&-\\
			20&0.137&0.136&0.138&0.135&0.133&-&-&-\\
			24&0.162&0.163&0.16&0.158&0.158&0.132&-&-\\
			30&0.199&0.197&0.2&0.207&0.211&0.217&-&-\\
			50&0.307&0.326&0.338&0.365&0.384&0.507&0.508&0.401\\
			100&0.609&0.653&0.698&0.826&0.867&1.254&1.662&1.946\\
			200&1.262&1.373&1.526&1.834&1.991&3.06&4.2&5.471\\
			300&2.008&2.199&2.392&2.986&3.27&5.213&7.49&9.716\\
			500&3.795&4.107&4.558&5.858&6.579&11.177&17.018&20.889\\
		\end{tabular}
			\caption{Average running time for SONC with ECOS, where the Newton polytope is the standard simplex; depending on number of terms $t$ and number of variables $n$.
			A ``-'' indicates that no such instance exists.}
			\label{table:standard_simplex}
		\end{table}
	\item[A comparison of splitting strategies for SONC] We compare the strategy to split coefficients variably as described in \Cref{subsubsection:variable_split} with the strategy to split coefficients evenly as described in \Cref{subsubsection:even_split} and as it was done in \cite{Dressler:Iliman:deWolff:SONCApproachConstraints}.
		First note, whenever both strategies found a solution, then the variable split is at least as good as the even split.
		However, there are 421 cases, where the even split found a solution, but the variable split failed; probably the increased problem size leads to numerical issues.

		Using \textsc{ECOS}, we cannot verify our initial hypothesis regarding a speed improvement gained by using even split.
		The quotients of the running time of even splitting divided by the running time of variable splitting range from 0.03 to 3.07, but with an average 1.284.
		A closer inspection shows that only 1042 instances are solved faster by even splitting, whereas for 7893 instances the variable split is faster.
		If we distinguish this behavior by the number of terms and the number of variables, then we get the results from \cref{table:split_strategy_time}.
		Only for polynomials with many terms, but few variables, we get ratios below 1. 
		So only in these cases, the even splitting is faster on average.
		Since the bound obtained by the even splitting can be as large as to have an overflow, i.e. numerically infinite, we therefore suggest to use the variable splitting \textbf{always} as first choice.
		\begin{table}[t]
			\begin{tabular}{c|cccccccc}
				&2&3&4&8&10&20&30&40\\
				\hline
				6&1.08&1.07&1.1&-&-&-&-&-\\
				9&1.16&1.15&1.08&-&-&-&-&-\\
				12&1.22&1.2&1.21&1.04&-&-&-&-\\
				20&1.28&1.22&1.26&1.38&1.35&-&-&-\\
				24&1.26&1.2&1.23&1.42&1.51&1.03&-&-\\
				30&1.2&1.17&1.23&1.47&1.55&1.02&-&-\\
				50&1.07&1.07&1.13&1.43&1.6&2.04&1.42&-\\
				100&0.896&0.789&0.838&1.16&1.36&2.15&2.18&2.07\\
				200&-&0.389&0.317&0.609&0.791&1.94&2.11&1.97\\
				300&-&0.327&0.172&0.254&0.437&1.67&1.96&1.99\\
				500&-&-&0.0522&0.0694&0.134&1.14&1.76&1.85\\
			\end{tabular}
			\caption{Median quotient of the time ``even'' over ``variable'' splitting.}
			\label{table:split_strategy_time}
		\end{table}
	\item[Handling degenerate terms]
		In our whole data set, \degenerateInstances{} instances contain degenerate terms.
		For \unboundedInstances{} of these, we could certify, that they are unbounded.
		In \soncBoundDegenerate{} cases, which means about \soncDegeneratePercent{}\%, we could compute a bound via SONC, despite the presence of degenerate points.
		So we conclude experimentally, that degenerate terms \emph{might} cause problems for SONC, but \emph{usually} we still can compute a bound.
	\item[Qualitative runtime comparison of SOS and SONC] 
		\newcommand{\noTry}{$\times$}
		\newcommand{\specialTry}[1]{#1$^*$}
		In \cref{table:time_SOS}, in the first lines of each cell, we show the average running time of the fastest one among the SOS methods which we ran on each of our instances.
		Already for 8 variables in degree 6 we observe a drastic increase in the running time.
		Furthermore, in each entry marked with a ``\noTry'', the dimension of the psd-matrix in the corresponding SDP would have exceeded $1000\times1000$.
		During our experiments we observed that problems beyond this bound regularly lead to memory errors, because they require more than 16GB RAM.
		Even if they do not, we easily have run times of several hours, despite using the ``sparse'' option of SOSTOOLS.
		Thus, we did not attempt solving these problems systematically.
		\begin{table}[t]
		\begin{tabular}{c|cccccccc}
			$d\setminus n$&2&3&4&8&10&20&30&40\\
			\hline
			\multirow{2}{*}{6}&0.147&0.215&0.275&40.345&542.595&\noTry&-&-\\
			&\textbf{0.0929}&\textbf{0.169}&\textbf{0.259}&\textbf{1.34}&\textbf{15.5}&\textbf{0.225}&-&-\\
			\hline
			\multirow{2}{*}{8}&0.179&0.296&0.642&\specialTry{3.236}\tablefootnote{Only a single instance was solved by SOS.}&\noTry&-&-&-\\
			&\textbf{0.14}&\textbf{0.217}&\textbf{0.39}&\textbf{1.33}&\textbf{0.345}&-&-&-\\
			\hline
			\multirow{2}{*}{10}&0.202&0.365&2.095&\noTry&\noTry&\noTry&-&-\\
			&\textbf{0.138}&\textbf{0.23}&\textbf{0.474}&\textbf{0.45}&\textbf{0.381}&\textbf{12.6}&-&-\\
			\hline
			\multirow{2}{*}{20}&0.357&9.25&\specialTry{3074}\tablefootnote{Computations were aborted after few instances.}&\noTry&\noTry&\noTry&-&-\\
			&\textbf{0.199}&\textbf{0.4}&\textbf{1.08}&\textbf{7.28}&\textbf{10.7}&\textbf{0.969}&-&-\\
			\hline
			\multirow{2}{*}{30}&0.966&\specialTry{414.45}&\noTry&\noTry&\noTry&\noTry&-&-\\
			&\textbf{0.337}&\textbf{0.982}&\textbf{4.5}&\textbf{19.5}&\textbf{103}&\textbf{25.6}&-&-\\
			\hline
			\multirow{2}{*}{40}&3.445&\noTry&\noTry&\noTry&\noTry&\noTry&\noTry&-\\
			&\textbf{0.82}&\textbf{2.65}&\textbf{5.33}&\textbf{108}&\textbf{93.1}&\textbf{38.7}&\textbf{23.2}&-\\
			\hline
			\multirow{2}{*}{50}&11.583&\noTry&\noTry&\noTry&\noTry&\noTry&\noTry&\noTry\\
			&\textbf{0.721}&\textbf{4.04}&\textbf{17.1}&\textbf{154}&\textbf{83.1}&\textbf{25.2}&\textbf{27.1}&\textbf{41.9}\\
			\hline
			\multirow{2}{*}{60}&2.277&\noTry&\noTry&\noTry&\noTry&\noTry&\noTry&-\\
			&\textbf{0.766}&\textbf{7.98}&\textbf{170}&\textbf{148}&\textbf{95.4}&\textbf{33}&\textbf{36.1}&-\\
		\end{tabular}
			\caption{Average running time of the fastest SOS method (normal font) compared to ECOS with variable split (bold), depending on degree $d$ and number of variables $n$.
			Note that for higher degree, we generally have more terms.
			A ``\noTry'' indicates, that solving was not attempted, to avoid memory errors.
			A ``-'' indicates, we have no solved instance with these parameters.
			A ``\specialTry{}'' means, we only attempted SOSTOOLS with the sparse option.
			}
			\label{table:time_SOS}
			\label{table:time_SONC}
		\end{table}
		We already observed that the running time of SONC depends on the number of terms, but not on the degree. 
		The running time of SOS, however, depends on the degree primarily, since, in contrast to SONC, computing an SOS certificate does not keep the support invariant. 
		In order to compare both methods we hence proceed as follows:
		For each pair $(n,d)$ we consider the largest number of terms $t$, where we created instances, and compute the average running time of SONC only for those instances.
		Thus, we have a worst-case scenario for SONC in our setting.
		The results are presented in \cref{table:time_SONC} in the second lines.
		Note that for large degrees we can also expect a large Newton polytopes and hence can have more terms in the instances. 
		Thus, we consider our setup to be disadvantageous for SONC especially in these cases. 
		The ``-'' in the table mean, we do not have instances with arbitrary Newton polytope for these parameters.
		For $(n,d)=(40,60)$, we met the limit of our method for generating polynomials, which we explained in \Cref{subsection:generating_polynomials}.
		In the other cases, our generation algorithm failed to find enough terms.

		We make the following two main observations:
		\begin{itemize}
			\item We can solve far larger and hence far more instances with SONC, than we could solve with SOS.
			\item Even in the parameter range, where SOS found a solution, SONC usually is faster (despite the disadvantage for SONC in the setup).
		\end{itemize}

		To exemplify this difference, we take a close look at the instances satisfying $n=3$ and $d=20$. We evaluate the average runtimes for varying numbers of terms, as shown in \cref{figure:plot_n3_d20}. 
		The running time of SONC starts at 61ms for $t=6$ and reaches 3.56s for $t=500$.
		The growth of the runtime is roughly linear in $t$.
		On contrast, SOS requires more than 5s, even for few terms, going up to around 25s.
		Additionally, we observe, that SOS already exploited sparsity to some extent; for fewer terms, we can expect a smaller Newton polytope, which results in smaller SDPs.
		\begin{figure}[t]
			\ifpictures
			\centering
\begin{tikzpicture}

\begin{axis}[
xlabel={number of terms},
ylabel={time in s},
xmin=-18.7, xmax=524.7,
ymin=-1.20321539833334, ymax=26.609912765,
tick align=outside,
tick pos=left,
x grid style={white!69.01960784313725!black},
y grid style={white!69.01960784313725!black},
legend entries={{SONC},{SOS}},
legend style={at={(0.03,0.97)}, anchor=north west, draw=white!80.0!black},
legend cell align={left}
]
\addlegendimage{no markers, blue}
\addlegendimage{no markers, red}
\addplot [semithick, blue]
table {%
6 0.0610177
9 0.075445896551
12 0.087069847457
20 0.145920833333
24 0.192931637931
30 0.209464666666
50 0.374968220338
100 0.60111368421
200 1.26031945
300 1.949864210526
500 3.567698583333
};
\addplot [semithick, blue, mark=square*, mark size=3, mark options={solid}, only marks, forget plot]
table {%
6 0.0610177
9 0.075445896551
12 0.087069847457
20 0.145920833333
24 0.192931637931
30 0.209464666666
50 0.374968220338
100 0.60111368421
200 1.26031945
300 1.949864210526
500 3.567698583333
};
\addplot [semithick, red]
table {%
6 7.81487353333333
9 5.34646184482758
12 5.99127408474576
20 6.02587753333333
24 7.40526253448276
30 8.76933466666667
50 9.67258416949153
100 14.9026767894737
200 18.92487615
300 23.5258575789474
500 25.3456796666667
};
\addplot [semithick, red, mark=triangle*, mark size=3, mark options={solid}, only marks, forget plot]
table {%
6 7.81487353333333
9 5.34646184482758
12 5.99127408474576
20 6.02587753333333
24 7.40526253448276
30 8.76933466666667
50 9.67258416949153
100 14.9026767894737
200 18.92487615
300 23.5258575789474
500 25.3456796666667
};
\end{axis}

\end{tikzpicture}
			\caption{Average running time of SONC and SOS for instances with $n=3$ and $d=20$, depending on the number of terms.}
			\label{figure:plot_n3_d20}
			\fi
		\end{figure}
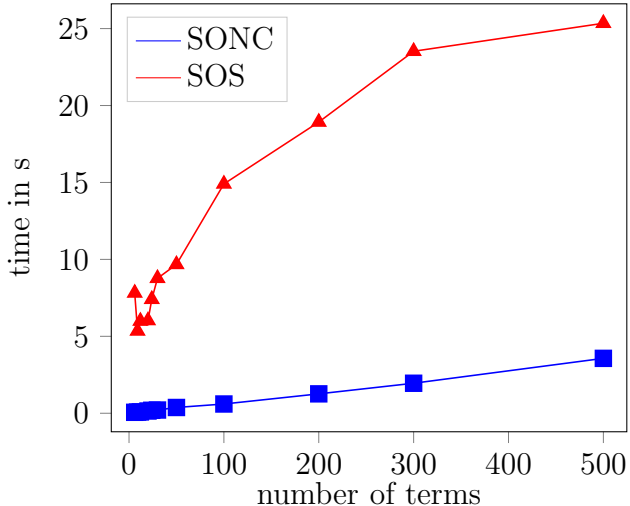
	\item[Quantitative comparison of SONC and SOS] For every single instance, we could solve with SONC, \textsc{ECOS} was the fastest method, unless it failed to find a solution at all.
		The latter happened in only 195 instances.
		Since such a failure, however, only takes few seconds or even milliseconds, a user can quickly switch to another method like \textsc{Matlab}/\textsc{SeDuMi}. 
		Overall, among our \instances{} instances, we found a SONC bound for \soncBound{} of them.
		Only in 4 cases, the fastest SOS method found a bound faster than SONC.
		In all of these, \textsc{ECOS} failed, so we found the SONC bound with \textsc{Matlab}/\textsc{SeDuMi}, which is considerably slower.
		Even if we restrict ourselves to SONC with variable splitting of the coefficients, there are still only 15 instances where SOS was quicker in finding a solution. 
		Since these were of different sizes, we could not find a pattern among these 15 instances.
		The largest difference and the best ratio for SOS regarding the running times was 5.6 seconds to 33.3 seconds. 

		An overview of the ratios of the running times between SOS and SONC is shown in the right graphic of \cref{figure:plot_bar_difference}.
		In about half of the cases, where both algorithms found a solution, SONC was faster by a factor of at least 4.
		However, there are instances with a speedup of more than 1000, and the largest ratio is around 20,000.

		In the left graphic of \cref{figure:plot_bar_difference} we list the differences between the optima, obtained by SONC and SOS.
		When both algorithms found a bound, then in most cases SOS found the better bound, but sometimes the bound of SONC was better.
		But the main observation is that there were only 44 instances, where SOS found a bound and SONC failed.
		Contrary to this, we have 11760 polynomial where SONC succeeded but SOS could not find a bound, or would have exceeded our thresholds, see \Cref{subsection:experimental:setup}.
		Of the 3693 instances, where SOS found a better bound than SONC, 782 instances (21.2\%) have the scaled standard simplex as Newton polytope.
		Since the scaled standard simplex is an \emph{\struc{$H$-simplex}}, we have by construction that $\SOSopt\leq \GPopt$ in these cases, see \cite[Section 5]{Iliman:deWolff:Circuits}.

		In addition, there are \trivialInstances{} instances, which are sums of monomial squares and 136 instances, where both algorithms failed.
		Out of these at least \unboundedInstances{} instances are unbounded.
		For the remaining 99 the status is unknown, since checking whether a polynomial is bounded is \coNP{}-hard on its own, see \cref{subsection:SOS}.
\end{description}

\section{Resume and Outlook}
\label{section:conclusion}
This article contains of two main contributions. First, we present a new algorithm, including implementation, to compute lower bounds for multivariate polynomials using SONC certificates.
This is the first implementation of this kind and it is the first algorithm to compute such a bound based on nonnegative circuit polynomials, where after giving the polynomial, even for arbitrary Newton polytope, no further human interaction is required.
Furthermore, this algorithm computes a lower bound for a sparse polynomial in polynomial time.

Second, we provide an experimental comparison of SONC (using our algorithm and GP) solvers and SOS (using various standard SDP solvers). Based on these experiments, we draw the following key conclusions:
\begin{enumerate}
	\item The runtime for SONC certificates depends on the number of variables and the cardinality of the support only. 
	It does not depend on the degree.
	\item Among the two strategies for splitting coefficients introduced in \Cref{subsection:arbitrary_newton}, the variable splitting strategy yields always better bounds and is, to our surprise, mostly faster than the even splitting strategy.
	\item Among the tested GP solvers, \textsc{ECOS} is \textbf{by far} the best one for computing SONC certificates. We remark, however, that \textsc{Mosek} announced to add an exponential cone solver in its next release, which we have not tested yet.
	\item SONC certificates can be computed extremely fast. Even for very large instances, the runtime remains in the realm of seconds. In a direct comparison of those cases where both SONC and SOS yield a bound, SONC is faster than SOS in over $\mathbf{99.5\%}$ of all cases.
	\item If both strategies yield a bound, then SOS yields better bounds than the current SONC algorithm in the clear majority ($82.5\%$) of the cases. 
		However, often the bounds do not differ very much, and there is a nontrivial amount of cases, when the SONC bound is better.
		See \cref{figure:plot_bar_difference} for details.
	\item SONC is extremely robust.
		Even for the majority of instances with degenerate terms, a potential source of problems for SONC, we were able to compute bounds.
	Moreover it uses vastly less memory than SOS. 
	While in our experiments (for 16 GB RAM) SOS certificates can be computed up to corresponding SDPs with matrices of size roughly $1000 \times 1000$  (these are e.g., instances of type $n=4$, $d=20$), SONC easily handles cases of 40 variables and 500 terms.
\end{enumerate}

For unconstrained polynomial optimization, we summarize our findings as follows:
\begin{enumerate}
	\item In a standard setting, it is, due to the short runtime and the little memory requirements, almost always best to compute a SONC certificate first, and then, depending on the size of the problem, try to compute an SOS certificate afterwards.
	\item If an SOS certificate is computable, then often, but not always, SOS yields better bounds than SONC.
	\item Especially for large, sparse instances, SONC is the certificate of choice.
	\item Large instances with many terms in the boundary are problematic for both SOS and SONC.
\end{enumerate}

In its current stage, SONC certificates are not ready for most applications, yet.
The main problems to overcome are, in our opinion:
\begin{enumerate}
	\item Improve the SONC theory of constrained optimization introduced in \cite{Dressler:Iliman:deWolff:SONCApproachConstraints,Dressler:Iliman:deWolff:Positivstellensatz}, relate it to Chandrasekaran's and Shah's SAGE certificates \cite{Chandrasekaran:Shah:SAGE}, and implement constrained optimization in \textsc{POEM}.
	\item Develop and implement a good way to compute minimizers exploiting a given SONC decomposition in order to obtain upper bounds and to compute the duality gap.
	\item Improve our algorithms (e.g., with respect to finding optimal coverings or carrying out preprocessing steps) to achieve better lower bounds via SONC.
\end{enumerate}

We will tackle these issues in future publications. 
The main challenge here is how this approach can be refined without losing polynomial running time in theory, and without vastly increasing the runtime and the required memory in practice.

\bibliographystyle{halphainit}
\bibliography{Experimental_Comparison_SONC_SOS_Arxiv1}

\newcommand{\etalchar}[1]{$^{#1}$}
\begin{thebibliography}{BKVH07}

\bibitem[ABS97]{Avis:Bremner:Seidel:ConvexHullAlgorithms}
D.~Avis, D.~Bremner, and R.~Seidel.
\newblock How good are convex hull algorithms?
\newblock {\em Computational Geometry}, 7(5-6):265--301, 1997.

\bibitem[AJV]{Andersen:Dahl:Vandenberghe:CVXOPT}
M.~Andersen, J.Dahl, and L.~Vandenberghe.
\newblock Cvxopt - python software for convex optimization.
\newblock \url{cvxopt.org}.

\bibitem[BKVH07]{Boyd:Kim:Vandenberghe:TutorialOnGP}
S.~Boyd, S.~Kim, L.~Vandenberghe, and A.~Hassibi.
\newblock A tutorial on geometric programming.
\newblock {\em Optim. Eng.}, 8(1):67--127, 2007.

\bibitem[Ble06]{Blekherman:ConeComparison}
G.~Blekherman.
\newblock There are significantly more nonegative polynomials than sums of
  squares.
\newblock {\em Israel Journal of Mathematics}, 153(1):355--380, Dec 2006.

\bibitem[BPT13]{Blekherman:Parrilo:Thomas}
G.~Blekherman, P.~Parrilo, and R.~Thomas.
\newblock {\em Semidefinite Optimization and Convex Algebraic Geometry},
  volume~13 of {\em MOS-SIAM Series on Optimization}.
\newblock SIAM and the Mathematical Optimization Society, Philadelphia, 2013.

\bibitem[BV04]{Boyd:Vandenberghe:ConvexOptimization}
S.~Boyd and L.~Vandenberghe.
\newblock {\em Convex optimization}.
\newblock Cambridge University Press, Cambridge, 2004.

\bibitem[CS16]{Chandrasekaran:Shah:SAGE}
V.~Chandrasekaran and P.~Shah.
\newblock Relative {E}ntropy {R}elaxations for {S}ignomial {O}ptimization.
\newblock {\em SIAM J. Optim.}, 26(2):1147--1173, 2016.

\bibitem[DB16]{cvxpy}
S.~Diamond and S.~Boyd.
\newblock {CVXPY}: A {P}ython-embedded modeling language for convex
  optimization.
\newblock {\em Journal of Machine Learning Research}, 2016.

\bibitem[DCB13]{ecos}
A.~Domahidi, E.~Chu, and S.~Boyd.
\newblock {ECOS}: {A}n {SOCP} solver for embedded systems.
\newblock In {\em European Control Conference (ECC)}, pages 3071--3076, 2013.

\bibitem[DIdW16]{Dressler:Iliman:deWolff:SONCApproachConstraints}
M.~Dressler, S.~Iliman, and T.~de~Wolff.
\newblock An approach to constrained polynomial optimization via nonnegative
  circuit polynomials and geometric programming, 2016.
\newblock To appear in the Journal of Symbolic Computation (MEGA 2017 special
  issue); see also {\sf arXiv:1602.06180}.

\bibitem[DIdW17]{Dressler:Iliman:deWolff:Positivstellensatz}
M.~Dressler, S.~Iliman, and T.~de~Wolff.
\newblock A {P}ositivstellensatz for {S}ums of {N}onnegative {C}ircuit
  {P}olynomials.
\newblock {\em SIAM J. Appl. Algebra Geom.}, 1(1):536--555, 2017.

\bibitem[DPZ67]{Duffin:Peterson:Zener:Book}
R.~Duffin, E.~Peterson, and C.~Zener.
\newblock {\em Geometric programming: {T}heory and application}.
\newblock John Wiley \& Sons, Inc., New York-London-Sydney, 1967.

\bibitem[dW15]{deWolff:Circuits:OWR}
T.~de~Wolff.
\newblock Amoebas, nonnegative polynomials and sums of squares supported on
  circuits.
\newblock {\em {Oberwolfach Rep.}}, (23):1308--1311, 2015.

\bibitem[FK11]{Fidalgo:Kovacec}
C.~Fidalgo and A.~Kovacec.
\newblock Positive semidefinite diagonal minus tail forms are sums of squares.
\newblock {\em Math. Z.}, 269(3-4):629--645, 2011.

\bibitem[GB08]{cvx_article}
M.~Grant and S.~Boyd.
\newblock Graph implementations for nonsmooth convex programs.
\newblock In V.~Blondel, S.~Boyd, and H.~Kimura, editors, {\em Recent Advances
  in Learning and Control}, Lecture Notes in Control and Information Sciences,
  pages 95--110. Springer-Verlag Limited, 2008.
\newblock \url{http://stanford.edu/~boyd/papers/graph_dcp.html}.

\bibitem[GM12a]{Ghasemi:Marshall:GP}
M.~Ghasemi and M.~Marshall.
\newblock Lower bounds for polynomials using geometric programming.
\newblock {\em SIAM J. Optim.}, 22(2):460--473, 2012.

\bibitem[GM12b]{ghasemi_marshall_2012}
M.~Ghasemi and M.~Marshall.
\newblock Lower bounds for polynomials using geometric programming.
\newblock {\em SIAM Journal on Optimization}, 22(2):460--473, 2012.

\bibitem[GM13]{Ghasemi:Marshall:GP:Semialgebraic}
M.~Ghasemi and M.~Marshall.
\newblock Lower bounds for a polynomial on a basic closed semialgebraic set
  using geometric programming, 2013.
\newblock Preprint, {\sf arxiv:1311.3726}.

\bibitem[Hil88]{Hilbert:Seminal}
D.~Hilbert.
\newblock Ueber die {D}arstellung definiter {F}ormen als {S}umme von
  {F}ormenquadraten.
\newblock {\em Math. Ann.}, 32(3):342--350, 1888.

\bibitem[HLL09]{gloptipoly}
D.~Henrion, J.-B. Lasserre, and J.~L{\"o}fberg.
\newblock Gloptipoly 3: moments, optimization and semidefinite programming.
\newblock {\em Optimization Methods \& Software}, 24(4-5):761--779, 2009.

\bibitem[IdW16a]{Iliman:deWolff:Circuits}
S.~Iliman and T.~de~Wolff.
\newblock Amoebas, nonnegative polynomials and sums of squares supported on
  circuits.
\newblock {\em Res. Math. Sci.}, 3:3:9, 2016.

\bibitem[IdW16b]{Iliman:deWolff:GP}
S.~Iliman and T.~de~Wolff.
\newblock Lower bounds for polynomials with simplex newton polytopes based on
  geometric programming.
\newblock {\em SIAM J. Optim.}, 26(2):1128--1146, 2016.

\bibitem[Kha79]{Khachiyan:LPPolytime}
L.~G. Khachiyan.
\newblock A polynomial algorithm in linear programming.
\newblock In {\em Doklady Academii Nauk SSSR}, volume 244, pages 1093--1096,
  1979.

\bibitem[Las10]{Lasserre:Book:CPOPApplications}
J.~Lasserre.
\newblock {\em Moments, positive polynomials and their applications}, volume~1
  of {\em Imperial College Press Optimization Series}.
\newblock Imperial College Press, London, 2010.

\bibitem[Las15]{Lasserre:Book:CPOPIntroduction}
J.~Lasserre.
\newblock {\em An introduction to polynomial and semi-algebraic optimization}.
\newblock Cambridge Texts in Applied Mathematics. Cambridge University Press,
  Cambridge, 2015.

\bibitem[Lau09]{Laurent:Survey}
M.~Laurent.
\newblock Sums of squares, moment matrices and optimization over polynomials.
\newblock In {\em Emerging applications of algebraic geometry}, volume 149 of
  {\em IMA Vol. Math. Appl.}, pages 157--270. Springer, New York, 2009.

\bibitem[L{\"{o}}f04]{yalmip}
J.~L{\"{o}}fberg.
\newblock Yalmip: A toolbox for modeling and optimization in matlab.
\newblock In {\em Computer Aided Control Systems Design, 2004 IEEE
  International Symposium on}, pages 284--289. IEEE, 2004.

\bibitem[NN94]{Nesterov:Nemirovskii}
Y.~Nesterov and A.~Nemirovskii.
\newblock {\em Interior Point Polynomial Algorithms in Convex Programming}.
\newblock Studies in Applied Mathematics. Society for Industrial and Applied
  Mathematics, 1994.

\bibitem[Oxl11]{Oxley:MatroidTheory}
J.~Oxley.
\newblock {\em Matroid theory}, volume~21 of {\em Oxford Graduate Texts in
  Mathematics}.
\newblock Oxford University Press, Oxford, second edition, 2011.

\bibitem[PAV{\etalchar{+}}13]{sostools}
A.~Papachristodoulou, J.~Anderson, G.~Valmorbida, S.~Prajna, P.~Seiler, and
  P.~A. Parrilo.
\newblock {\em {SOSTOOLS}: Sum of squares optimization toolbox for {MATLAB}}.
\newblock \url{http://arxiv.org/abs/1310.4716}, 2013.
\newblock Available from \url{http://www.eng.ox.ac.uk/control/sostools},
  \url{http://www.cds.caltech.edu/sostools} and
  \url{http://www.mit.edu/\~parrilo/sostools}.

\bibitem[Rez89]{Reznick:AGI}
B.~Reznick.
\newblock Forms derived from the arithmetic-geometric inequality.
\newblock {\em Math. Ann.}, 283(3):431--464, 1989.

\bibitem[Rez00]{Reznick:HilbertSurvey}
B.~Reznick.
\newblock Some concrete aspects of {H}ilbert's 17th {P}roblem.
\newblock In {\em Real algebraic geometry and ordered structures ({B}aton
  {R}ouge, {LA}, 1996)}, volume 253 of {\em Contemp. Math.}, pages 251--272.
  Amer. Math. Soc., Providence, RI, 2000.

\bibitem[SdW18]{poem:software}
H.~Seidler and T.~de~Wolff.
\newblock {POEM}: Effective methods in polynomial optimization, version
  0.1.1.0.
\newblock \url{https://www3.math.tu-berlin.de/combi/RAAGConOpt/poem.html},
  April 2018.

\bibitem[Stu99]{sedumi}
J.~F. Sturm.
\newblock Using sedumi 1.02, a matlab toolbox for optimization over symmetric
  cones.
\newblock {\em Optimization methods and software}, 11(1-4):625--653, 1999.

\bibitem[TTT99]{sdpt3}
K.~Toh, M.~Todd, and R.~Tutuncu.
\newblock Sdpt3 --- a matlab software package for semidefinite programming.
\newblock {\em Optimization Methods and Software}, 11:545--581, 1999.

\end{thebibliography}

\end{document}